\documentclass[a4paper]{article}

\usepackage{amsmath,amsfonts,amssymb,amscd,amsthm,amsbsy}
\usepackage[latin1]{inputenc}   

\usepackage[a4paper]{geometry}
\usepackage{tikz}

\usepackage{pgf,tikz}
\usetikzlibrary{arrows}

\def\ds{\displaystyle}

\def \to{\rightarrow}
\def \states{\mathbb{T}^d}
\def \T{\mathbb{T}}
\def \R {\mathbb{R}}

\def \Z {\mathbb{Z}}
\def\dive {{\rm div}}
\def \inte{\int_{\T^d}}
\def \mes{\mathcal{P}}
\def\Pk{\mes(\states)}
\def\Pks{\mes_s(\states)}

\def\dk{{\bf d}_1}

\def \ep{\varepsilon}

\newcommand{\be}{\begin{equation}}
\newcommand{\ee}{\end{equation}}

\newtheorem{Theorem}{Theorem}[section]
\newtheorem{Definition}[Theorem]{Definition}
\newtheorem{Proposition}[Theorem]{Proposition}
\newtheorem{Lemma}[Theorem]{Lemma}

\title{Stable solutions in potential mean field game systems}
\author{Ariela Briani \thanks{LMPT, Université de Tours} \and Pierre Cardaliaguet\thanks{Universit\'e Paris-Dauphine, PSL Research University, CNRS, Ceremade, 75016 Paris, France. cardaliaguet@ceremade.dauphine.fr}}

\begin{document}

\maketitle

\begin{abstract} We introduce the notion of stable solution in mean field game theory:  they are locally isolated solutions of the mean field game system. We prove that such solutions exist in potential mean field games and are local attractors for learning procedures. 
\end{abstract}


\section{Introduction}

Mean field games (MFG) are Nash equilibrium configurations in differential games with infinitely many infinitesimal players. If the existence of such equilibria holds in general frameworks, one cannot expect their uniqueness without strong restrictions. However the multiplicity of equilibria is a real issue in terms of applications: indeed, when there are several equilibria, it is not clear how the players can coordinate in order to decide which equilibrium to play. The aim of this work is to introduce a notion of stable solutions for the MFG system: these stable solutions are Nash equilibria which have  the property of being locally isolated as well as local attractors for learning procedures. In this sense they are robust  with respect to perturbation. 

Let us recall that the terminology and main properties of Mean Field Games were introduced by Lasry and Lions  in a series of papers \cite{LL06cr1, LL06cr2, LL07mf}. At the same period Huang, Caines and Malhamé \cite{HCMieeeAC06} discussed the same concept under the name of  Nash certainty equivalence principle. One way to represent an MFG equilibrium is through the following system of partial differential equations.

\be\label{eq:MFG}
\left\{\begin{array}{l}
-\partial_t u -\Delta u +H(x,Du)=f(x,m) \; {\rm in}\; \T^d\times (t_0,T),\\
\partial_t m -\Delta m - \dive(mD_pH(x, Du))= 0 \; {\rm in}\; \T^d\times (t_0,T) \\
m(x,t_0)=m_0(x), \; u(x,T)=g(x,m(T))  \; {\rm in}\; \T^d.
\end{array}\right.
\ee
(To simplify the discussion related to boundary conditions, we work here in the torus  $\T^d:=\R^d/\Z^d$). In the above system the pair $(u,m)$ is the unknown. The map $u=u(x,t)$ can be interpreted as the value function of a (small) player, while $m(t)=m(x,t)$ is understood as the evolving probability density of the players at time $t$. Note that $u$ satisfies a backward Hamilton-Jacobi equation while $m$ solves a forward Kolmogorov equation with initial condition $m(t_0)=m_0$, where $m_0$ is a given probability density on $\T^d$. The Hamiltonian $H:\T^d\times \R^d\to \R$ is typically smooth and convex in the second variable. The coupling functions $f,g$ depend on the space variable and on the probability density. 

Existence of solution for the MFG system \eqref{eq:MFG} has been established under general assumptions by Lasry and Lions \cite{LL06cr2, LL07mf}. Uniqueness, however, only holds under restrictive conditions: either the horizon is small, or the couplings functions are monotone, see \cite{LL07mf}.

\bigskip

In this paper we are interested in MFG systems which might have several solutions. Our aim is to introduce a particular notion, the stable MFG equilibrium. The natural idea for this is to call stable a solution of the MFG system \eqref{eq:MFG} for which the associated  linearized system has only one solution, the trivial one. We first show (Proposition \ref{prop:stable}) that, with this definition, stable MFG solutions are isolated, in the sense that there is no other solution (with the same initial condition) in a neighborhood. If this result is very natural and quite expected, it illustrates well the notion. 

The two other results are more subtle and obtained in the framework of potential MFG games. Following \cite{CH}, we say that the MFG game is potential if there exists $F,G$ such that 
$$
\frac{\delta F}{\delta m}(m,x)= f(x,m) \qquad {\rm and }\qquad \frac{\delta G}{\delta m}(m,x)= g(x,m), 
$$
where the derivative (with respect to the measure $m$) in taken in the sense of \cite{CDLL} (see also subsection \ref{subseq:notation}).  Following \cite{LL07mf}, we know then that the MFG game has a ``potential". Namely, let us define the functional (where $m=m(x,t)$ is an evolving probability density and $w=w(x,t)$ is a vector field)
$$
J(m,w):=
  \int_{t_0}^T\inte L\left(x, \frac{w(x,t)}{m(x,t)}\right)m(x,t) dxdt+ \int_{t_0}^T F(m(t))dt + G(m(T))
$$
where $L$ is the convex conjugate of $H$ (see \eqref{eq.defL}) and the pair $(m,w)$ has to fulfill the continuity equation   
$$
\partial_t m-\Delta m +\dive(w)= 0 \; {\rm in }\; \T^d\times [t_0,T], \:  \: m(t_0)=m_0.
$$
Then any minimizer $(m,w)$ of $J$ corresponds to a solution to the MFG system \cite{LL07mf}, in the sense that there exists $u=u(x,t)$ such that the pair $(u,m)$ solves \eqref{eq:MFG} and $w=-mD_pH(x,Du)$. 

Our first main result says that, when the game is potential, there are many  initial measures $m_0$ starting from which there is a stable solution to the MFG system. Indeed, we show (Theorem \ref{theo:stable}) that, if $(u,m)$ is a MFG equilibrium on the time interval $[t_0,T]$ which corresponds to a minimizer of the potential $J$, then its restriction to any subinterval $[t_1,T]$ (where $t_1\in (t_0,T)$) is a stable MFG equilibrium. 

Second we show (in Theorem \ref{theo:CvLearning}) that, for potential MFG systems, stable equilibria are local attractors for a learning procedure. Here we consider the learning procedure inspired by the Fictitious Play \cite{BrownFictitiousPlay} and introduced by  the second author and S. Hadikhanloo in  \cite{CH}. Given $\mu^0=(\mu^0(t))$ an initial guess of the evolving probability density of the players, we define by induction the sequence $(u^n, m^n,\mu^n)$:
$$
\left\{ \begin{array}{l}
-\partial_t u^{n+1}-\Delta u^{n+1}+H(x, Du^{n+1})=f(x, \mu^n)\\
\partial_t m^{n+1}-\Delta m^{n+1}-\dive ( m^{n+1}D_pH(x,Du^{n+1}))=0\\
m^{n+1}(0)= m_0, \qquad u^{n+1}(T,x)= g(x,\mu^n(T))
\end{array}\right.
$$
and
\be\label{rule}
\mu^{n+1}= \frac{n}{n+1} \mu^n+ \frac{1}{n+1} m^{n+1}.
\ee
The interpretation of this system is that the game is played over and over. At stage $n+1$ (the $(n+1)-$th time the game is played), all the players play as if the population density is going to evolve according to $\mu^n$. The map $u^{n+1}$ is the value function of each small player. When the players play optimally in this game, the population density actually evolves according to $m^{n+1}$. The players then update their estimate of the evolving population density by taking the average of the previous observations (i.e., by the rule \eqref{rule}). 
It is proved in \cite{CH} that this simple learning procedure converges: namely, if the game is potential, then any converging subsequence of the relatively compact (for the uniform convergence) sequence $(u^n,m^n)$ is a MFG Nash equilibrium. Here we show that, if in addition the equilibrium $(u,m)$ is stable and if $\mu^0$ is sufficiently close to $m$, then the entire sequence $(u^n, m^n)$ converges to $(u,m)$. This result illustrates again the robustness of the stable equilibria: if the players deviate from a stable equilibrium configuration,  the learning procedure pushes them back to this equilibrium. 
\bigskip

The techniques used in the paper are inspired by finite dimensional optimal control (see, for instance, in the monograph of Cannarsa and Sinestrari \cite{CannarsaSinestrari}). In particular the fact that the solutions are stable when  restricted  to a subinterval is known in this context. The proof requires a uniqueness result for the solution of the MFG system and its linearized version {\it given an initial condition for $m$ and for $Du$}. To show such a statement we rely on a method developed by Lions and Malgrange for the backward uniqueness of the heat equation, and subsequently extended to systems in Cannarsa and Tessitore \cite{CT}.

\bigskip

The paper is organized in the following way: we first introduce the notation, assumptions and recall standard existence and uniqueness results for the MFG system. We also prove a first new uniqueness result for the MFG system {\it given the initial measure  and the initial vector field}. Then we discuss the notion of potential games (section \ref{sec:potential}) and illustrate the notion by expliciting an example of multiple solutions for a MFG system. In section \ref{sec:stable} we define the notion of stable MFG equilibria  and provide our first main result (Theorem \ref{theo:stable}) on the existence of such equilibria in the potential case. We complete the paper by the analysis of the fictitious play for MFG system (section \ref{sec:fictitious}). In Appendix we prove the uniqueness of a general linear forward-backward system with given initial data: this result is used several times in the text. \\

{\bf Acknowledgement:} The authors were partially supported by the ANR (Agence Nationale de la Recherche) project ANR-16-CE40-0015-01.

\section{Assumptions and basic results on MFG systems}

\subsection{Notation and assumption}\label{subseq:notation}

Throughout the paper we work on the $d-$dimensional torus $\T^d:=\R^d/\Z^d$: this simplifying assumption allows us to ignore issues related to  boundary conditions. We also work in a finite horizon $T>0$. We denote by $\Pk$ the set of Borel probability measures on $\T^d$, endowed with the Monge-Kantororitch distance $\dk$. We define ${\mathcal M}(\T^d,\R^d)$ as the set of Borel vector measures $w$ with finite mass $|w|$. For $\alpha\in [0,1]$, we denote by $C^\alpha([0,T],\Pk)$ the set of maps $m:[0,T]\to\Pk$ which are $\alpha-$Holder continuous if $\alpha\in (0,1)$, continuous if $\alpha=0$, Lipschitz continuous if $\alpha=1$.

Next we recall the notion of derivative of a map $U:\Pk\to \R$ as introduced in \cite{CDLL}. We say that $U$ is $C^1$ if there exists a continuous map $\frac{\delta U}{\delta m}:\T^d\times \Pk\to \R$ such that 
$$
U(m')-U(m)=\int_0^1\inte \frac{\delta U}{\delta m}(x,  (1-t)m+tm')(m'-m)(dx) dt \qquad \forall m,m'\in \Pk.
$$
The derivative is normalized by the condition 
$$
\inte \frac{\delta U}{\delta m}(x, m)dm(x)=0 \qquad \forall m\in \Pk.
$$
We write indifferently $\ds \frac{\delta U}{\delta m}(x,m)(\mu)$ and $\ds \inte \frac{\delta U}{\delta m}(x,m)d\mu(x)$ for a signed measure $\mu$ with finite mass. 

If $u:\T^d\times [0,T]\to \R$ is a sufficiently smooth map, we denote by $Du(x,t)$ and $\Delta u(x,t)$ its spatial gradient and spatial Laplacian and by $\partial_t u(x,t)$ its partial derivative with respect to the time variable. For $p=1, 2, \infty$, we denote by $\|\cdot\|_p$ the $L^p$ norm of a map on $\T^d$. We denote by $C^0$ the set of continuous maps, by $C^{2,0}$ the set of of maps such that $D^2u$ and $\partial_t u$ exist and are continuous. By abuse of notation, we set 
$$
\|u\|_{C^{1,0}}= \|u\|_\infty+ \|D u\|_\infty.
$$
We will also use the classical Holder space. For $\alpha\in (0,1)$, we denote by $C^{0,\alpha}$ the set of map $u=u\in C^0$ which are $\alpha-$Holder continuous in space and $\alpha/2$ in time. The set $C^{1,\alpha}$ is the set of maps $u\in C^0$ such that $u$ and $Du$ belong to $C^{0,\alpha}$. Finally $C^{2,\alpha}$ consists in the maps $u\in C^{2,0}$ such that $D^2u$ and $\partial_tu$ belong to $C^{0,\alpha}$. Let us recall that, if $u$ is in $C^{2,\alpha}$, then $u$ is also in $C^{1,\alpha}$. 
\bigskip

\noindent {\bf Assumptions.} The following assumptions are in force throughout the paper. 

\begin{itemize}
\item The Hamiltonian $H=H(x,p):\T^d\times \R^d\to \R$ is of class $C^2$ and satisfies 
\be\label{cond:Hcoercive}
C^{-1}I_d \leq D^2_{pp}H(x,p)\leq CI_d
\ee
We define the convex conjugate $L$ of $H$ as
\be\label{eq.defL}
L(x,q)= \sup_{p\in \R^d} \{ -p\cdot q-H(x,p)  \}.
\ee

\item The coupling functions $f, g: \T^d\times \Pk\to \R$ are globally Lipschitz continuous with space derivatives $\partial_{x_i} f,\partial_{x_i}g, \partial^2_{x_ix_j}g: \T^d\times \Pk\to \R$ also Lipschitz continuous. In the same way, the measure derivatives $\frac{\delta f}{\delta m}, \frac{\delta g}{\delta m}:\T^d\times \Pk\times \T^d\to \R$ are Lipschitz continuous. 

\end{itemize}

\subsection{The MFG system}

Let us recall that, under our standing assumptions, the MFG system \eqref{eq:MFG} has at least one classical solution: see, for instance, \cite{LL07mf}. In general it is not expected that this solution is unique: uniqueness in known to hold for short time horizon (or small data) or when the coupling functions $f$ and $g$ are monotone \cite{LL07mf}. We provide in the next section an example of multiple solutions. 

However we prove here that there is only one solution given the initial measure {\it and} the initial vector field. This result, which is of limited interest for the true mean field game system, will be used several times in the sequel. 

\begin{Proposition}\label{uniqueMFG} Let $(u_1, m_1)$ and $(u_2,m_2)$ be two solutions of the MFG system \eqref{eq:MFG} with the same initial initial condition for the measure $m_1(t_0)=m_2(t_0)=m_0$ and the same initial vector field  $Du_1(\cdot,t_0)=Du_2(\cdot,t_0)$ in $\T^d$. Then $(u_1,m_1)=(u_2,m_2)$ on $\T^d\times [t_0,T]$. 
\end{Proposition}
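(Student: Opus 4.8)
The plan is to argue by linearization and then invoke the general uniqueness theorem for linear forward-backward systems established in the Appendix. Set $\bar u := u_1-u_2$ and $\bar m := m_1-m_2$; I will show that $(\bar u,\bar m)$ solves a linear forward-backward parabolic system whose initial data vanish, and that this forces $\bar u\equiv 0$ and $\bar m\equiv 0$.

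First I would subtract the two copies of \eqref{eq:MFG} and rewrite every nonlinear difference as a bounded linear operator. By the fundamental theorem of calculus,
\[
H(x,Du_1)-H(x,Du_2)=V(x,t)\cdot D\bar u,\qquad V(x,t):=\int_0^1 D_pH\bigl(x,Du_2+sD\bar u\bigr)\,ds,
\]
and $f(x,m_1)-f(x,m_2)=\int_0^1 \frac{\delta f}{\delta m}\bigl(x,m_2+s\bar m\bigr)(\bar m)\,ds$. Since the $u_i$ are classical solutions their gradients are bounded, so $V$ is a bounded field by the $C^2$-regularity of $H$, and the assumptions on $f$ make the coupling a bounded linear operator in $\bar m$. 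Likewise,
\[
m_1D_pH(x,Du_1)-m_2D_pH(x,Du_2)=\bar m\,D_pH(x,Du_1)+m_2\,B(x,t)D\bar u,
\]
with $B(x,t):=\int_0^1 D^2_{pp}H(x,Du_2+sD\bar u)\,ds$ satisfying $C^{-1}I_d\le B\le CI_d$ by \eqref{cond:Hcoercive}. Hence $(\bar u,\bar m)$ solves
\[
\left\{\begin{array}{l}
-\partial_t\bar u-\Delta\bar u+V\cdot D\bar u=\ds\int_0^1 \frac{\delta f}{\delta m}(x,m_2+s\bar m)(\bar m)\,ds,\\[4pt]
\partial_t\bar m-\Delta\bar m-\dive\bigl(\bar m\,D_pH(x,Du_1)+m_2 B\,D\bar u\bigr)=0.
\end{array}\right.
\]
The data are $\bar m(\cdot,t_0)=0$ (both measures start at $m_0$) and $D\bar u(\cdot,t_0)=0$ (by hypothesis), together with the terminal relation $\bar u(\cdot,T)=g(x,m_1(T))-g(x,m_2(T))$, which is again a bounded linear expression in $\bar m(T)$ via $\frac{\delta g}{\delta m}$.

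It then remains to apply the general uniqueness result of the Appendix to this system. The main obstacle is precisely that it is genuinely forward-backward: the Kolmogorov equation for $\bar m$ is well posed forward in time, while the Hamilton-Jacobi equation for $\bar u$ is well posed only backward, yet we prescribe data for both unknowns at the \emph{same} (initial) time $t_0$. A one-directional energy/Gronwall argument therefore cannot close, and the vanishing of $D\bar u(\cdot,t_0)$ must be propagated forward across a backward parabolic equation, which is a backward-uniqueness phenomenon. This is exactly the setting of the Lions-Malgrange logarithmic-convexity method, extended to coupled systems by Cannarsa and Tessitore \cite{CT}, which I would carry out in the Appendix for a general linear forward-backward system with prescribed initial data. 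The only remaining verification for the present Proposition is that the coefficients $V$, $B$, $D_pH(x,Du_1)$ and the coupling operators built from $\frac{\delta f}{\delta m}$ and $\frac{\delta g}{\delta m}$ meet the boundedness and regularity hypotheses of that statement, which follows from the classical regularity of $(u_i,m_i)$ and the standing assumptions on $H$, $f$, $g$. Uniqueness then yields $\bar u\equiv 0$ and $\bar m\equiv 0$, that is $(u_1,m_1)=(u_2,m_2)$ on $\T^d\times[t_0,T]$.
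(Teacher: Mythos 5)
Your overall strategy --- linearize the difference of the two solutions and invoke the forward-backward uniqueness theorem of the Appendix (the Lions--Malgrange / Cannarsa--Tessitore mechanism) --- is the same as the paper's, and your linearization identities are correct. But there is a genuine gap at the last step: the system you derive for $(\bar u,\bar m)$ does \emph{not} satisfy the hypotheses of Theorem \ref{theo:continuation}, and no routine ``verification of boundedness of the coefficients'' can make it do so. That theorem requires (a) zero \emph{initial} data for the backward unknowns, (b) the forward drift $h$ to be controlled pointwise by the unknowns themselves, $|h|^2\le C_0(|\mu|^2+|u|^2)$, and (c) $|\dive(h)|^2\le C_0(|\mu|^2+|D\mu|^2+|u|^2+|Du|^2)$, i.e.\ at most \emph{first} derivatives of the backward unknown. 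In your system the backward unknown is $\bar u$, for which you have no initial datum at all (knowing $D\bar u(\cdot,t_0)=0$ only pins $\bar u(\cdot,t_0)$ down to an unknown constant; this part is curable by subtracting that constant, since the equation only sees derivatives of $\bar u$), and, more seriously, the forward drift is $h=-\bigl(\bar m\,D_pH(x,Du_1)+m_2B\,D\bar u\bigr)$, which contains $D\bar u$, so that $\dive(h)$ contains $D^2\bar u$: one derivative of the backward unknown too many in both (b) and (c). This is not a technicality of the statement but of the method: in the Appendix proof, the key estimate for $\kappa\int\|\tilde\mu\|_2^2$ is obtained by testing the forward equation with $\partial_t\tilde\mu$ and requires $\int|\dive(h)|^2$, which in your setting would involve $\|D^2\bar u\|_2^2$, a quantity the whole weighted-energy scheme never controls; the argument does not close.

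The missing idea --- and the actual key step of the paper's proof --- is to differentiate the Hamilton--Jacobi equation in each variable $x_i$ and take $w_i=\partial_{x_i}(u_2-u_1)$, $i=1,\dots,d$, as the backward unknowns (with $\mu=m_2-m_1$ as the forward one). Then the hypothesis $Du_1(\cdot,t_0)=Du_2(\cdot,t_0)$ becomes exactly the required zero initial condition $w(\cdot,t_0)=0$; the drift $h=m_2D_pH(x,Du_2)-m_1D_pH(x,Du_1)$ satisfies $|h|\le C(|w|+|\mu|)$, because the gradient coupling is now the unknown itself, and $|\dive(h)|\le C\bigl(|w|+|Dw|+|\mu|+|D\mu|\bigr)$, which is of admissible order. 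Theorem \ref{theo:continuation} then applies and gives $\mu\equiv0$ (and $w\equiv0$), after which $u_1=u_2$ follows because the two backward Hamilton--Jacobi equations have identical right-hand sides and terminal conditions. Your proposal needs this differentiation step inserted; as written, the appeal to the Appendix fails precisely on the hypotheses you deferred checking.
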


\begin{proof} Without loss of generality we assume that $t_0=0$. Let us set, for $i=1, \dots, d$,  $w_i= \partial_{x_i}(u_2-u_1)$, $w= (w_i)_{i=1, \dots, d}$, $\mu= m_2-m_1$. Then $(w,\mu)$ solves the system
$$
\left\{\begin{array}{l}
-\partial_t w_i -\Delta w_i +g_i=0 \; {\rm in}\; \T^d\times (0,T), \; i=1, \dots, d,\\
\partial_t \mu -\Delta \mu + \dive(h)= 0 \; {\rm in}\; \T^d\times (0,T) \\
w_i(x,0)=\mu(x,0)=0 \; {\rm in}\; \T^d, \; i=1, \dots, d.
\end{array}\right.
$$
where 
$$
\begin{array}{rl}
\ds g_i(x,t) \; =  & \ds  \partial_{x_i}H(x,Du_2)+ D_pH(x, Du_2)\cdot D(\partial_{x_i}u_2) \\
 & \ds - \partial_{x_i}H(x,Du_1)- D_pH(x, Du_1)\cdot D(\partial_{x_i}u_1)
+ \partial_{x_i} f(x,m_2)- \partial_{x_i}f(x,m_1)
\end{array}
$$
and
$$
h(x,t)= m_2 D_pH(x, Du_2(x,t))-m_1 D_pH(x, Du_1(x,t)) \: . 
$$
As the $u_i$ are classical solutions and, by our assumption, $\partial_{x_i}f: \T^d\times \Pk\to \R$ is globally Lipschitz continuous,  we have
$$
\begin{array}{rl}
\ds \left|g_i(x,t)\right| \; \leq  & \ds  C\left( |D(u_2-u_1)(x,t)|+ |D(\partial_{x_i}(u_2-u_1))(x,t)|  + \|m_2(\cdot,t)-m_1(\cdot,t)\|_2 \right) \\
\leq & \ds  C\left( |w(x,t)|+ |Dw(x,t)|  + \|\mu(\cdot,t)\|_2 \right).
\end{array}
$$
In the same way, 
$$
\begin{array}{rl}
\ds \left| h(x,t)\right| \leq  & \ds C\left(  |(m_2-m_1)(x,t)|+ |D(u_2-u_1)(x,t)| \right) \\
\leq & \ds C \left( |w(x,t)|+ |\mu(x,t)| \right).
\end{array}
$$
Moreover, as $\|Du_i\|_\infty+ \|D^2u_i\|_\infty+ \|m_i\|_\infty +\|Dm_i\|_\infty \leq C$, we also obtain  
$$
\begin{array}{rl}
\ds \left| \dive (h)(x,t)\right| \leq  & \ds C \left( |w(x,t)|+|Dw(x,t)|+ |\mu(x,t)|+ |D\mu(x,t)| \right). \\
\end{array}
$$
Then we can conclude from Theorem \ref{theo:continuation} in the Appendix that $\mu=0$ on $\T^d\times [0,T]$. Since $m_1=m_2$ and since the backward Hamilton-Jacobi equations for the $u_i$ have the same right-hand side, their solution is the same and we conclude that $u_1=u_2$. 
\end{proof}

\section{Potential MFG systems}\label{sec:potential}

Throughout this section we assume that the MFG system is {\it potential}, i.e., the coupling functions $f$ and $g$ derive from potentials: there exists $F,G:\Pk\to\R$ of class $C^1$ such that 
\be\label{eq:potential}
f=\frac{\delta F}{\delta m}, \qquad g=\frac{\delta G}{\delta m}.
\ee
The aim of the section is to recall that, under this condition, the MFG systems is then potential (i.e., the equilibria are obtained through a minimization procedure). We derive from this an example of non uniqueness of the solution of the MFG system.

\subsection{Solutions of the MFG system and minimizers of the potential}

Let  $t_0\in [0,T]$ and $m_0\in \Pk$. Our aim is to define the functional $J$
$$
J(m,w)=\int_{t_0}^T\inte L\left(x, \frac{w(x,t)}{m(x,t)}\right)m(x,t) dxdt+ \int_{t_0}^T F(m(t))dt + G(m(T))
$$
under the constraint
 $$
 \partial_t m-\Delta m +\dive(w)= 0 \; {\rm in }\; \T^d\times [t_0,T], \:  \: m(t_0)=m_0,
  $$
for a large class of data $(m,w)$. This classical construction is reminiscent of the Benamou and Brenier approach of optimal transport and can be found, for instance, in \cite{AGS}. 

We denote by ${\mathcal E}(t_0)$ the  set of  pairs time dependent Borel measures $(m(t),w(t))\in \Pk\times {\mathcal M}(\T^d,\R^d)$ such that $t\to m(t)$ is continuous, 
$$
\int_0^T |w(t)|dt <\infty, 
$$
and equation
$$
 \partial_t m-\Delta m +\dive(w)= 0 \; {\rm in }\; \T^d\times [t_0,T], \:  \: m(t_0)=m_0
 $$
holds in the sense of distribution. We also denote by ${\mathcal E}_2(t_0)$ the subset of $(m(t),w(t))\in {\mathcal E}(t_0)$ such that $w(t)$ is absolutely continuous with respect to $m(t)$ with a density $\frac{dw(t)}{dm(t)}$ satisfying 
$$
\inte  \int_0^T \left| \frac{dw(t)}{dm(t)}(x)\right|^2 m(dx,t)dt<\infty.
$$
Then we defined $J$ on ${\mathcal E}(t_0)$ by 
$$
J(m,w):=
\left\{
\begin{array}{l}
\displaystyle  \int_{t_0}^T\inte L\left(x, \frac{dw(t)}{dm(t)}(x)\right)m(dx,t) dt+ \int_{t_0}^T F(m(t))dt + G(m(T)) \\
\qquad \qquad  \qquad \qquad \qquad \qquad \qquad \qquad \qquad \qquad \qquad \qquad   \mbox{ if  $(m,w)\in {\mathcal E}_2(t_0)$}   \\
  \\
   +\infty \qquad \qquad \qquad \qquad \qquad \qquad  \mbox{ otherwise} 
\end{array}
\right.
$$
When it  will be important to stress the dependence on the initial data we will write  $J(t_0, m_0, \cdot,\cdot)$.

The following result states that minimizers of the functional $J$ correspond to solution of the MFG system. This remark was first pointed out in \cite{LL07mf} and used repetitively since then in different contexts. 

\begin{Proposition}\label{prop:minMFG} Under our standing assumptions:  
\begin{itemize}
\item[(i)] For any $t_0\in [0,T]$ and $m_0\in \Pk$ there exists a minimum $(m,w)  \in {\mathcal E}_2$  of  $J(t_0,m_0, \cdot,\cdot)$.
\item[(ii)] Let $(m,w)$ be minimum of $J(t_0,m_0, \cdot,\cdot)$. Then there exists $u$ such that $(u,m)$ is a classical solution to the MFG system 
\be\label{MFGprop}
\left\{\begin{array}{l}
-\partial_t u -\Delta u +H(x,Du)=f(x,m) \; {\rm in}\; \T^d\times (t_0,T), \\
\partial_t m -\Delta m - \dive(mD_pH(x, Du))= 0 \; {\rm in}\; \T^d\times (t_0,T) \\
m(x,t_0)=m_0(x), \; u(x,T)=g(x,m(T))  \; {\rm in}\; \T^d 
\end{array}\right.
\ee
and $ w(x,t)= -m(x,t) D_pH(x,Du)$. 
In particular, any minimizer is a classical solution of the above system, i.e., $u,m\in C^{2,\alpha}$.  
\end{itemize}
\end{Proposition}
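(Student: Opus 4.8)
The plan is to prove the existence statement (i) by the direct method of the calculus of variations, and the characterization (ii) by reducing to a \emph{convex} problem through a freezing of the couplings, followed by convex duality and a parabolic regularity bootstrap.

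For (i), I would first note that the problem is feasible with finite value: the heat flow $(m,0)$ (with $m$ solving $\partial_t m-\Delta m=0$, $m(t_0)=m_0$) lies in ${\mathcal E}_2$, and since \eqref{cond:Hcoercive} makes $H(x,\cdot)$ superlinear, \eqref{eq.defL} gives a two-sided bound $c|q|^2-C\le L(x,q)\le C(1+|q|^2)$, so the energy of $(m,0)$ is finite and, as $F,G$ are bounded on the $\dk$-compact set $\Pk$, one has $\inf J<+\infty$. The lower bound on $L$ then shows $J$ is bounded below and that a minimizing sequence $(m^n,w^n)$ has $\int_{t_0}^T\inte|w^n/m^n|^2\,dm^n\,dt$ bounded; by Cauchy--Schwarz this controls $\int_{t_0}^T|w^n(t)|\,dt$, giving precompactness of $(w^n)$ as measures, while the continuity equation and the smoothing $-\Delta m$ term yield $\dk$-equicontinuity of $t\mapsto m^n(t)$, hence precompactness in $C^0([t_0,T],\Pk)$. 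Passing to the limit in the linear continuity equation, using the joint convexity and weak-$*$ lower semicontinuity of the kinetic functional together with the $\dk$-continuity of the $F,G$ terms, produces a minimizer, which lies in ${\mathcal E}_2$ because its energy is finite.

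For (ii), let $(\hat m,\hat w)$ be a minimizer. Because $F,G$ need not be convex, I would not compare $J$ values directly. Instead I freeze the couplings at $\hat m$ and introduce the auxiliary functional
$$
J_0(m,w)=\int_{t_0}^T\inte L\Big(x,\tfrac{dw}{dm}\Big)dm\,dt+\int_{t_0}^T\inte f(x,\hat m(t))\,m(dx,t)\,dt+\inte g(x,\hat m(T))\,m(dx,T),
$$
which \emph{is} convex since its coupling terms are now linear in $m$. The key observation is that along any segment $(m_s,w_s)=(1-s)(\hat m,\hat w)+s(m,w)$ (admissible since the continuity equation is linear) the correction $J-J_0$ has vanishing $s$-derivative at $s=0$ — precisely because $f=\frac{\delta F}{\delta m}$ and $g=\frac{\delta G}{\delta m}$. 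Hence minimality of $(\hat m,\hat w)$ for $J$ forces $\frac{d}{ds}J_0(m_s)|_{0^+}\ge 0$, and convexity of $J_0$ upgrades this to $J_0(\hat m,\hat w)\le J_0(m,w)$ for every competitor, so $(\hat m,\hat w)$ is a global minimizer of the convex problem $J_0$.

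It then remains to characterize this convex minimizer. By convex duality for the linearly coupled Benamou--Brenier functional $J_0$ — the dual being a Hamilton--Jacobi problem — there exists $u$ solving $-\partial_t u-\Delta u+H(x,Du)=f(x,\hat m)$ with $u(\cdot,T)=g(\cdot,\hat m(T))$, and optimality (equality in the underlying Fenchel inequality from \eqref{eq.defL}) forces $\hat w=-\hat m\,D_pH(x,Du)$; since the frozen couplings equal the true couplings evaluated at $\hat m$, the pair $(u,\hat m)$ solves \eqref{MFGprop}. The main obstacle is the regularity and strict positivity needed to make all of this rigorous and to reach $u,\hat m\in C^{2,\alpha}$: the kinetic functional degenerates where $m$ vanishes, but the nondegenerate diffusion $-\Delta m$ forces $\hat m>0$ and smooth for $t>t_0$ by the strong maximum principle, and $\hat m$ being $\dk$-Hölder in time makes $f(x,\hat m(\cdot))$ and $g(\cdot,\hat m(T))$ Hölder, so Schauder theory gives $u\in C^{2,\alpha}$; feeding $\hat w=-\hat m D_pH(x,Du)$ (a bounded, Hölder drift) back into the Fokker--Planck equation upgrades $\hat m$ to $C^{2,\alpha}$ and closes the bootstrap. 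I expect the justification of the duality/optimality step and of the regularity up to $t=t_0$, where only $m_0\in\Pk$ is given, to require the most care.
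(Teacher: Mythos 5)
Your proposal is correct and follows essentially the same route as the paper: the direct method for (i), and for (ii) freezing the couplings at the minimizer (your $J_0$ is exactly the paper's auxiliary functional $\tilde J$), using the potential structure $f=\frac{\delta F}{\delta m}$, $g=\frac{\delta G}{\delta m}$ to show the minimizer of $J$ also globally minimizes this convex functional, then Fenchel--Rockafellar duality against the Hamilton--Jacobi problem to produce $u$ and the relation $w=-mD_pH(x,Du)$, and finally a Schauder bootstrap for $C^{2,\alpha}$ regularity. The only cosmetic difference is that the paper phrases your ``vanishing derivative of $J-J_0$ along segments'' step as a first-order expansion of $\Psi$ combined with convexity of the kinetic term, which is the same computation.
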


The proof is standard and is closely related to techniques used in optimal transport theory. However, as it has never been explicitly checked in this specific framework where the potentials are non convex, we provide the main argument for the sake of completeness. 

\begin{proof} 
To prove (i) let us consider a minimizing sequence  $(m_n,w_n)\in {\mathcal E}_2$. By construction 
$J(m_n,w_n) \leq C$, thus the coercivity assumption \eqref{cond:Hcoercive} on $H$ implies the following uniform bound: 
\begin{equation} \label{eqbound}
  \int_{t_0}^T\inte \left| \frac{dw_n(t)}{d m_n(t)}(x)\right|^2 m_n(dx,t)dt \leq C \: . 
\end{equation}
We can then argue as in \cite[Lemma 3.1]{CGPT} to conclude that the sequence $(m_n)$ is uniformly bounded in $C^{1/2}([0,T],\Pk)$. In particular,   
$$
  \int_{t_0}^T |w_n(t)| dt \leq 
     \Big(   \int_{t_0}^T\inte \left|\frac{d  w_n(t)}{d m_n(t)}\right|^2m_n(dx,t)dt  \Big)^{1/2}  
  \Big(   \int_{t_0}^T\inte m_n(dx,t) dt     \Big)^{1/2}  \leq C.
  $$
So, up to a subsequence, $w_n \rightarrow w$ in ${\mathcal M}((0,T) \times \T^d, \R^d)$ and $(m_n)$ converges in $C^0([0,T], \Pk)$.  By standard argument the pair $(m,w)$ belongs to ${\mathcal E}_2$.
To conclude that the couple $(m,w)$ is a minimum of the functional $J$  we remark that the functional $J$  is 
lower semicontinuous on ${\mathcal E}$.

In order to prove (ii) we define on ${\mathcal E}$
$$
 \Phi(m,w):=
 \left\{
\begin{array}{ll}
\displaystyle  \int_{t_0}^T\inte L\left(x, \frac{ w(t)}{ m(t)}\right)m(dx,t)dt 
  &  \mbox{ if  $(m,w)\in {\mathcal E}_2$}   \\
  \\
   +\infty & \mbox{ otherwise} 
\end{array}
\right.
$$ 
and 
$$
 \Psi(m):= \int_{t_0}^T F(m(t))dt + G(m(T)) .
$$
Note that $J(m,w)= \Phi(m,w)+ \Psi(m)$. Let $(\bar{m},\bar{w})$ be a minimum of $J$. Recall that $\bar m\in C^{1/2}([0,T], \Pk)$. We first claim  that,  for any 
$(m,w) \in {\mathcal E}$, we have 
\begin{equation} \label{eqlemme2} 
\begin{array}{r}
\ds -\int_{t_0}^T\inte f(x,\bar{m}(t))(m(dx,t)-\bar{m}(dx,t)) - \inte g(x,\bar{m}(T))(m(dx,T)-\bar{m}(dx,T)) \qquad\\
\qquad \qquad \qquad \qquad \ds  \leq \Phi(m,w)-\Phi(\bar{m},\bar{w})  \: . 
\end{array}
\end{equation}
Indeed, setting $m_\lambda:=(1-\lambda)\bar{m}+\lambda m$, $ w_\lambda:=(1-\lambda)\bar{w}+\lambda w$ , $\lambda \in (0,1)$ we have by minimality of $(\bar m,\bar w)$:
$$
\Phi(m_\lambda,w_\lambda)-\Phi(\bar{m},\bar{w}) \geq \Psi(\bar{m})-\Psi(m_\lambda).
$$ 
Thus, by the regularity assumptions on $F$ and $G$ and the convexity of $\Phi$, we obtain 
$$
\begin{array}{l}
\ds  \lambda(\Phi(m,w)-\Phi(\bar{m},\bar{w})  ) \\
\; \ds \geq \lambda \Big( -\int_{t_0}^T\inte f(x,\bar{m}(t,x))(m-\bar{m})(dx,t) - \inte g(x,\bar{m}(x,T))(m-\bar{m})(dx,T))  \Big)+o(\lambda).
\end{array}
$$
Then \eqref{eqlemme2} follows dividing by $\lambda$ and letting $\lambda$ tends to $0$. 

By \eqref{eqlemme2}, the pair $(\bar m,\bar w)$ is a minimizer of the following (local and convex) functional on ${\mathcal E}$: 
$$
\tilde J(m,w)= \Phi(m,w)+ \int_0^T\inte f(x,\bar{m}(t))m(dx,t)dt + \inte g(x,\bar{m}(T))m(dx,T).
$$
We can then follow the standard arguments for convex functionals (see for instance \cite{CGPT}): the problem of minimizing $\tilde J$ on ${\mathcal E}$ is the dual problem (in the sense of the  Fenchel-Rockafellar duality theorem) of the problem
$$
\inf_{u \in  C^2} \Big\{   -\inte m_0(x)u(x,0)dx \: : \:   -\partial_t u -\Delta u +H(x,Du) \leq f(x,\bar{m}(t))  \: \mbox{ and }\: u(x,T)\leq g(x,\bar{m}(x,T))   \Big\}. 
$$
By comparison, there is an obvious  minimum to this problem which is the  solution $\bar u$ to 
$$
\left\{\begin{array}{l}
-\partial_t \bar{u} -\Delta \bar{u} +H(x,D\bar{u}) = f(x,\bar{m}(t))\; {\rm in}\; \T^d\times (0,T)\\
 u(x,T)= g(x,\bar{m}(x,T))\; {\rm in}\; \T^d \: . 
 \end{array}\right.
 $$
 This solution is $C^{2,\alpha}$ because $\bar m\in C^{1/2}([0,T), \Pk)$. 
By the Fenchel-Rockafellar duality theorem,  see also   \cite[Lemma 3.2]{CGPT},  we have that  
$$
0=  \tilde J(\bar m, \bar w)-\inte m_0(x)\bar{u}(x,0)dx \:.
$$
This means that 
$$
\begin{array}{l}
\ds 0 = \int_{t_0}^T\inte \left(L\left(x, \frac{d w(t)}{d m(t)}\right) +f(x,\bar m(t)) \right) \bar m(dx,t)dt +\inte g(x,\bar m(T))\bar m(dx,T)-\inte m_0(x)\bar{u}(x,0) dx \\
\qquad \ds  = \int_{t_0}^T\inte \left(L\left(x, \frac{d w(t)}{d m(t)}\right)+ (-\partial_t \bar u-\Delta \bar u +H(x,D\bar u)) \right) \bar m(dx,t)dt +\inte g(x,\bar m(T))\bar m(dx,T)\\
\qquad \qquad \qquad \qquad \qquad \qquad \qquad \ds -\inte m_0(x)\bar{u}(x,0) dx \\
\qquad \ds = \int_{t_0}^T\inte \left(L\left(x, \frac{d w(t)}{d m(t)}\right)+ H(x,D\bar u)+Du\cdot (\frac{d\bar w(t)}{d\bar m(t)}) \right) \bar m(dx,t)dt
\end{array}
$$
where we used the equation for $\bar u$ in the second equality and the equation for $(\bar m,\bar w)$ in the last one. Recalling that $L$ is the convex conjugate of $H$ which is uniformly convex, we find
$$
 \frac{d\bar w(t)}{d\bar m(t)} = -D_pH(x, D\bar u) \qquad \bar m-{\rm a.e.}
 $$
This means that $\bar m$ solves the Kolmogorov equation 
$$
\partial_t \bar m-\Delta \bar m -\dive(\bar m  D_pH(x, D\bar u))=0, \qquad \bar m(\cdot, t_0)=m_0,
$$
which has a regular drift: thus $\bar m$ is  of class $C^{2,\alpha}$ by Schauder theory. Therefore $\bar w$ is also smooth and the proof of (ii) is complete. 
 \end{proof}

We now explain that, if $(m,w)$ is a  minimizers of $J(t_0, m_0,\cdot,\cdot)$, then, for any later time $t_1>t_0$, $(m,w)$ is the unique minimizer for $J(t_1,m(t_1), \cdot,\cdot)$. The argument borrows ideas in finite dimensional control theory \cite{CannarsaSinestrari}. 

\begin{Proposition} Let $(m,w)$ be  minimum of $J(t_0,m_0,\cdot,\cdot)$ for a given initial condition $m_0$ at time $t_0$. Then, for any $t_1\in (t_0,T)$, $J(t_1, m(t_1), \cdot,\cdot)$ has a unique minimum, which is the restriction to $[t_1,T]$ of $(m,w)$.  
\end{Proposition}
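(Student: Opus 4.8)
The plan is to establish two facts: an \emph{optimality principle}, namely that the restriction of $(m,w)$ to $[t_1,T]$ minimizes $J(t_1,m(t_1),\cdot,\cdot)$, and then the \emph{uniqueness} of this minimizer. The guiding idea, borrowed from finite dimensional control, is that an optimal trajectory continued on a strictly larger interval cannot bifurcate at an interior time, because its initial momentum — here the gradient $Du(\cdot,t_1)$ — is pinned down. The two results of the excerpt enter at the decisive step: Proposition~\ref{prop:minMFG} provides the smoothness of minimizers, and Proposition~\ref{uniqueMFG} provides backward uniqueness once the initial measure and the initial gradient are matched.

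For the optimality principle I would first record that $J$ splits additively over $[t_0,T]=[t_0,t_1]\cup[t_1,T]$: since $t\mapsto m(t)$ is continuous, the terminal term $G(m(T))$ belongs entirely to the $[t_1,T]$ part and no contribution is created at the junction $t_1$. If the restriction of $(m,w)$ to $[t_1,T]$ were not optimal for $J(t_1,m(t_1),\cdot,\cdot)$, one could replace it by a strictly cheaper competitor $(\nu,\zeta)$ with $\nu(t_1)=m(t_1)$; gluing $(m,w)$ on $[t_0,t_1]$ to $(\nu,\zeta)$ on $[t_1,T]$ then yields an element of $\mathcal E_2(t_0)$ (the continuity equation passes through $t_1$ precisely because $m$ and $\nu$ agree there and $m$ is continuous, so there is no jump) of strictly smaller cost $J(t_0,m_0,\cdot,\cdot)$, contradicting the minimality of $(m,w)$. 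Hence the restriction is a minimizer.

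For uniqueness, I would take an arbitrary minimizer $(\tilde m,\tilde w)$ of $J(t_1,m(t_1),\cdot,\cdot)$ and let $\tilde u$ be its MFG adjoint from Proposition~\ref{prop:minMFG}, so $\tilde w=-\tilde m\,D_pH(\cdot,D\tilde u)$ on $[t_1,T]$; similarly, by Proposition~\ref{prop:minMFG} the full minimizer has an adjoint $u$ with $w=-m\,D_pH(\cdot,Du)$. Gluing $(m,w)$ on $[t_0,t_1]$ to $(\tilde m,\tilde w)$ on $[t_1,T]$ produces $(\hat m,\hat w)\in\mathcal E_2(t_0)$, and by the additivity above — using that $(\tilde m,\tilde w)$ and the restriction of $(m,w)$ carry the same minimal cost on $[t_1,T]$ — the pair $(\hat m,\hat w)$ has the same cost as $(m,w)$, hence is itself a minimizer of $J(t_0,m_0,\cdot,\cdot)$. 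Proposition~\ref{prop:minMFG} applied to $(\hat m,\hat w)$ gives $\hat m\in C^{2,\alpha}$ and $\hat w=-\hat m\,D_pH(\cdot,D\hat u)$ continuous in time across $t_1$. Taking the left and right limits of $\hat w$ at $t_1$ from the two glued pieces yields
$$
-m(t_1)\,D_pH(\cdot,Du(\cdot,t_1))=-\tilde m(t_1)\,D_pH(\cdot,D\tilde u(\cdot,t_1)).
$$
Since $m(t_1)=\tilde m(t_1)$ is strictly positive on $\T^d$ (strong maximum principle for the uniformly parabolic Kolmogorov equation, as $t_1>t_0$) and $p\mapsto D_pH(x,p)$ is injective by the uniform convexity \eqref{cond:Hcoercive}, I conclude $Du(\cdot,t_1)=D\tilde u(\cdot,t_1)$. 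Both $(\tilde u,\tilde m)$ and the restriction $(u,m)$ are then classical solutions on $[t_1,T]$ with the same initial measure $m(t_1)$ and the same initial vector field, so Proposition~\ref{uniqueMFG} forces $(\tilde m,\tilde w)$ to equal the restriction of $(m,w)$.

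The step I expect to require the most care is the gluing itself: one must check that the concatenated pair genuinely lies in $\mathcal E_2(t_0)$ and solves the continuity equation distributionally across $t_1$, and that $J$ splits with no spurious term at $t_1$ — all of which rest on the continuity of $t\mapsto m(t)$ at the junction. Once this bookkeeping is granted, the essential and less obvious ingredient is the regularity of minimizers from Proposition~\ref{prop:minMFG}, which upgrades the a priori merely integrable field $\hat w$ to a time-continuous one; this is exactly what lets the strict convexity of $H$ invert $D_pH$ and recover the initial gradient, and thereby reduces uniqueness to the backward uniqueness of Proposition~\ref{uniqueMFG}.
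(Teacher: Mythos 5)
Your proof is correct and follows essentially the same route as the paper: split the cost at $t_1$ (dynamic programming), concatenate $(m,w)$ on $[t_0,t_1)$ with the competitor $(\tilde m,\tilde w)$ on $[t_1,T]$ to get a minimizer of $J(t_0,m_0,\cdot,\cdot)$, invoke Proposition \ref{prop:minMFG} for the regularity and the representation $\hat w=-\hat m\,D_pH(\cdot,D\hat u)$, recover the matching of gradients at $t_1$ from the positivity of $m(t_1)$ and the injectivity of $D_pH(x,\cdot)$, and conclude with the backward-uniqueness result of Proposition \ref{uniqueMFG}. The only cosmetic difference is that the paper deduces $Du=D\hat u$ on all of $(t_0,t_1)$ (where the glued pair coincides with $(m,w)$) and then lets $t\to t_1^-$, whereas you equate the two one-sided limits of the time-continuous version of $\hat w$ at the junction; both rest on exactly the same ingredients.
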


\begin{proof} By dynamic programming principle, the restriction to $[t_1,T]$ of $(m,w)$ is a minimum of $J(t_1, m(t_1), \cdot,\cdot)$.
Let $(\tilde m, \tilde w)$ be another minimum of $J(t_1, m(t_1), \cdot,\cdot)$. Then the following map
$$
(\hat m, \hat w)=\left\{\begin{array}{ll}
(m,w) & {\rm on }\; \T^d\times [t_0,t_1)  \\
(\tilde m, \tilde w) & {\rm on }\; \T^d\times [t_1,T]
\end{array}\right.
$$
is also a minimum of $J(t_0,m_0,\cdot,\cdot)$. Thus, by Proposition \ref{prop:minMFG}, there exist $u$, $\hat u$ such that $(u,m)$ and $(\hat u,\hat m)$ are both classical solutions to the MFG system \eqref{MFGprop}, with $w= -mD_pH(\cdot, Du)$ and $\hat w= -\hat mD_pH(\cdot, D\hat u)$. On $\T^d\times (t_0,t_1)$, we have 
$m=\hat m>0$ and $-\hat mD_pH(\cdot, D\hat u)= -mD_pH(\cdot, Du)$, so that, by uniform convexity of  $H$, we obtain the equality $Du=D\hat u$ on $\T^d\times (t_0,t_1)$. By continuity we infer that $Du(\cdot,t_1)= D\hat u(\cdot, t_1)$. Then  Proposition \ref{uniqueMFG} implies that  $(\hat u, \hat m)= (\tilde u, \tilde m)=(u,m)$ on $\T^d\times [t_1,T]$. 
\end{proof}

\subsection{An example of multiple solutions}

As an application of Proposition \ref{prop:minMFG}, we provide here a simple example of multiple solutions for the MFG system. Such examples in the time dependent setting are scarce: we are only aware of a recent result by Bardi and Fischer~\cite{BaFi}. 
The idea is relatively elementary: we suppose that the MFG system is symmetric with respect to the $x_1$ variable, but that the potential favors asymmetric solutions. Then the MFG system enjoys a symmetric solution as well as an asymmetric one. 

To build the example, let $\tau(x_1,x_2,\dots, x_d)=(-x_1, x_2, \dots, x_d)$ (viewed as a map on $\R^d$ as well as on $\T^d$) and let us denote by $\Pks$ the  subset of measures $m\in \Pk$ such that $\tau\sharp m=m$. We suppose, in addition to our general conditions on $H$, $f$ and $g$,  that:
\begin{itemize}
\item[(1)] The function $F$ is nonnegative and symmetric with respect to the $x_1$ variable: $F(\tau\sharp \: m)=F(m)$, $\forall m\in \Pk$.  Note that this implies that $f(\tau(x), m)=f(x,m)$ for any $m\in \Pks$. 
\item[(2)] There exists a measure $\mu\in\Pk$ such that 
$\ds \inf_{m\in \Pks} F(m)> F(\mu).$
\item[(3)] The Hamiltonian $H$ is symmetric with respect to the $x_1$ variable: $H(\tau(x),\tau(p))=H(x,p)$ for any $(x,p)\in \T^d\times \R^d$. 
Moreover $L$ is nonnegative. 
\end{itemize}
Given a parameter $\theta>0$, we consider the MFG system 
\be\label{MFGpropUNICITA}
\left\{\begin{array}{l}
-\partial_t u -\Delta u +H(x,Du)=\theta \: f(x,m) \; {\rm in}\; \T^d\times (0,T), \\
\partial_t m -\Delta m - \dive(mD_pH(x, Du))= 0 \; {\rm in}\; \T^d\times (0,T) \\
m(x,0)=m_0(x), \; u(x,T)=0  \; {\rm in}\; \T^d.
\end{array}\right.
\ee
 
\begin{Proposition}  \label{cosNONuni}  Under our assumptions, for any symmetric initial condition $m_0\in \Pks$  with a smooth density, there exist $T>0$ and $\theta>0$ large enough such that  the MFG system \eqref{MFGpropUNICITA} has at least two different solutions. 
\end{Proposition}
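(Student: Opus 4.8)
The plan is to realize the two solutions as minimizers of the potential functional and to exploit the reflection symmetry $\tau$. Write the functional attached to \eqref{MFGpropUNICITA} (so the running coupling is $\theta f$ and the terminal cost vanishes) as
\[
J_\theta(m,w)=\int_0^T\inte L\Big(x,\tfrac{dw(t)}{dm(t)}(x)\Big)m(dx,t)\,dt+\theta\int_0^T F(m(t))\,dt
\]
on $\mathcal E_2(0)$, with value $+\infty$ off $\mathcal E_2(0)$. By Proposition \ref{prop:minMFG} there is a minimizer $(m,w)$, and any minimizer yields a classical solution $(u,m)$ of \eqref{MFGpropUNICITA}. Set $\beta:=\min_{m\in\Pks}F(m)$ (attained since $F$ is continuous and $\Pks$ is compact) and $\delta:=\beta-F(\mu)>0$ by assumption (2); using the continuity of $F$ I may assume, after replacing $\mu$ by a nearby measure, that $\mu$ has a smooth positive density.

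First I would record the reflection invariance. For a pair $(m,w)$ define $m^\tau:=\tau\sharp m$ and the reflected field $w^\tau(x,t):=\tau\,w(\tau x,t)$ (understood via pushforward and the linear action of $\tau$). A direct computation, using that $\tau$ is a linear orthogonal involution so that the Laplacian and the divergence are equivariant, shows $(m^\tau,w^\tau)\in\mathcal E_2(0)$ whenever $(m,w)\in\mathcal E_2(0)$, since $m_0\in\Pks$. The identity $H(\tau x,\tau p)=H(x,p)$ from assumption (3) gives $L(\tau x,\tau q)=L(x,q)$, whence the kinetic term of $J_\theta$ is unchanged after the change of variables $y=\tau x$; the potential term is unchanged because $F(\tau\sharp\, m(t))=F(m(t))$ by assumption (1). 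Therefore $J_\theta(m^\tau,w^\tau)=J_\theta(m,w)$, so reflection maps minimizers to minimizers. Consequently, if the minimizer $(m,w)$ is \emph{not} symmetric (i.e.\ $m(t_1)\notin\Pks$ for some $t_1$, equivalently $m^\tau\neq m$), then $(m^\tau,w^\tau)$ is a \emph{distinct} minimizer, and the two solutions $(u,m)$ and $(u^\tau,m^\tau)$ of \eqref{MFGpropUNICITA} it produces are distinct.

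It thus remains to show that, for suitable $T$ and $\theta$, no minimizer is symmetric, which I would do by a two-sided estimate on the minimal value. For the lower bound: if $(m,w)$ is symmetric then $m(t)\in\Pks$ for all $t$, so $F(m(t))\ge\beta$, and since $L\ge0$ (assumption (3)) we get $J_\theta(m,w)\ge\theta T\beta$; hence $\inf\{J_\theta(m,w):m(t)\in\Pks\ \forall t\}\ge\theta T\beta$. For the upper bound I would build a fixed competitor $(\bar m,\bar w)$: on $[0,1]$ drive $m_0$ to $\mu$ through smooth positive densities with finite cost $C_0$ (first a short diffusion step with $\bar w=0$ to gain positivity, then an explicit interpolation with $\bar w=D\phi$ solving $\Delta\phi=\Delta\bar m-\partial_t\bar m$), and on $[1,T]$ keep $\bar m\equiv\mu$ with $\bar w=D\mu$, for which the per-unit-time kinetic cost $C_3:=\inte L(x,D\log\mu)\,\mu\,dx$ is finite. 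This yields
\[
\min J_\theta\le J_\theta(\bar m,\bar w)\le C_0+(T-1)C_3+\theta C_1+\theta(T-1)F(\mu),
\]
where $C_1$ bounds $\int_0^1F(\bar m)$ and $C_0,C_1,C_3$ depend on neither $T$ nor $\theta$. Since $\delta>0$, choosing $T$ so large that $T\delta>C_1-F(\mu)$ and then $\theta$ large enough makes the right-hand side strictly below $\theta T\beta$; hence the global minimizer has value strictly smaller than any symmetric configuration and is therefore asymmetric. Combined with the previous paragraph, this produces two distinct solutions.

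The main obstacle is the quantitative step: arranging the competitor so that its kinetic cost is controlled independently of $T$ and $\theta$ while the potential gain accumulates linearly in $T$, and ordering the choices (fix the competitor and $T$ first, then send $\theta\to\infty$) so that the $\theta$-linear terms dominate and the comparison $J_\theta(\bar m,\bar w)<\theta T\beta$ holds. The equivariance bookkeeping for the reflected field and the verification $L(\tau x,\tau q)=L(x,q)$ are routine but must be carried out carefully to guarantee that $J_\theta$ is genuinely invariant.
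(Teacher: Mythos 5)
Your proposal is correct, and it reaches the two solutions by a genuinely different route than the paper, even though the quantitative heart is the same. In both arguments one shows that, for $T$ and $\theta$ large, no curve staying in $\Pks$ can minimize the potential: any such curve costs at least $\theta T\min_{\Pks}F$ (using $L\ge 0$), while an explicit competitor --- reach a smooth positive measure near $\mu$ in unit time, then remain stationary --- costs strictly less; your ordering of the choices ($T$ first, then $\theta\to\infty$) works just as well as the paper's ($\theta$ first, then $T$), and your preliminary diffusion step is a clean way to ensure positivity of the competitor. The difference lies in how the second solution is produced. The paper first constructs a \emph{symmetric} solution $(\hat u,\hat m)$ of \eqref{MFGpropUNICITA} by a fixed-point (Schauder) argument on $C^\alpha([0,T],\Pks)$, using the symmetry of $H$, $f$ and $m_0$ to propagate symmetry through the Hamilton--Jacobi and Kolmogorov equations; the second solution is then the one attached to a minimizer of $J$, which must differ from $(\hat u,\hat m)$ because $(\hat m,\hat w)$ is not a minimizer. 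You never construct a symmetric solution: instead you verify that $J_\theta$ is invariant under the reflection $(m,w)\mapsto(m^\tau,w^\tau)$ (via $L(\tau x,\tau q)=L(x,q)$, equivariance of the continuity equation under the orthogonal involution $\tau$, and $\tau\sharp m_0=m_0$), so the reflection of the necessarily asymmetric minimizer is a second, distinct minimizer, and Proposition~\ref{prop:minMFG} converts both into distinct classical solutions. What each approach buys: yours dispenses with the fixed-point construction entirely and produces two solutions that are both minimizers, hence by Theorem~\ref{theo:stable} both become stable solutions upon restriction to any subinterval $[t_1,T]$ (a bonus the paper's symmetric solution does not enjoy); the paper's approach exhibits a symmetric equilibrium explicitly, which in addition shows that \eqref{MFGpropUNICITA} admits solutions that are \emph{not} minimizers of the potential.
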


\begin{proof} Let us first explain that the MFG system has a symmetric solution. For this we build, in a standard way, a fixed point mapping $\Phi$ on $C^\alpha([0,T], \Pks)$ (for some fixed $\alpha\in (0,1/2))$: see for instance \cite{LL07mf}. To $m\in  C^\alpha([0,T], \Pks)$ we first associate the solution to 
$$
\left\{\begin{array}{l}
-\partial_t u -\Delta u +H(x,Du)=\theta \: f(x,m) \; {\rm in}\; \T^d\times (0,T), \\
u(x,T)=0  \; {\rm in}\; \T^d.
\end{array}\right.
$$
Because of our assumptions, the above equation has a unique classical solution $u$, which is therefore symmetric with respect to $x_1$. Moreover, thanks to the uniform convexity of $H$,  $Du$ is bounded independently of $m$. Next we consider the solution $\tilde m$ to the Kolmogorov equation
$$
\left\{\begin{array}{l}
\partial_t \tilde m -\Delta \tilde m - \dive(\tilde mD_pH(x, Du))= 0 \; {\rm in}\; \T^d\times (0,T) \\
\tilde m(x,0)=m_0(x)\; {\rm in}\; \T^d.
\end{array}\right.
$$
Here again there exists a classical solution $\tilde m$, which is symmetric with respect to $x_1$. Moreover, as $Du$ is bounded, $\tilde m$ is bounded in $C^{1/2}([0,T], \Pks)$ independently of $m$. Finally we set $\Phi(m):=\tilde m$. It is easy to check that $\Phi$ has a fixed point, which leads to a symmetric solution to the MFG system. We denote by $(\hat u,\hat m)$ this symmetric solution and set $\hat w:= -\hat m D_pH(x,D\hat u)$. 

Next we show that the symmetric solution cannot minimize the functional $J$ for suitable choice of $T$ and $\theta$. 
Let us denote by $M$  the minimum value of the functional $F$ over $\Pks$. Recalling that there exists $\mu$ such that $F(\mu)<M$. We fix  $\varepsilon, \eta >0$ such that 
$$ 
M > \varepsilon + \max_{ \dk(m,\mu)\leq \eta} F(m).
$$ 
We then  construct (e.g. by convolution) a  measure  $\bar{m}\in \Pk$ which is absolutely continuous and such that  (for some constant $C>0$)
$$
 \bar{m} \in C^{\infty}(\T^d) \: , \: \: \dk(\bar{m},\mu)\leq \eta  \: , \: \: \frac{1}{C} \leq  \bar{m}(x) \leq C. 
$$
Setting $\bar{w}=D \bar{m}$, we have $-\Delta \bar{m} - \dive(\bar{w})= 0$.  Then we choose $\theta\geq 1$ large enough to have 
$$
\frac{\varepsilon \theta}{2} > \inte L\left(x, \frac{\bar{w}(x)}{\bar{m}(x)}\right) \bar{m}(x)dx.
$$ 
Next we define the pair $(m,w)$ which connects $m_0$ and $\bar m$ in time $1$ and is equal to $(\bar m, \bar w)$ after time $1$: 
$$
(m(x,t),w(x,t)):=\left((1-t)m_0(x)+t\bar m(x), -D\phi(x)+(1-t)D m_0(x)+ tD \bar m(x)\right) \; {\rm for}\; t\in [0,1], 
$$
(where $\phi$ is a solution to $-\Delta \phi= m_0-\bar m$ in $\T^d$) and
$$
(m(x,t), w(x,t))= (\bar m(x),\bar w(x))  \qquad {\rm for}\; t\in [1,T].
$$
Then one easily checks that the pair $(m,w)$ belongs to ${\mathcal E}_2$. Finally we choose $T$ large enough to have
$$
\sup_{m'\in \Pk} F(m')+ \int_0^1\inte m(x,t)L\Big(x,\frac{w(x,t)}{m(x,t)}\Big)dxdt \leq \frac{ \ep \theta T}{2}.
$$
As $L\geq 0$, we have  
$$
 J(\hat{m},\hat{w}) \geq \theta \: T \: M  > \theta T \varepsilon + \theta T \max_{ d_1(m',\mu)\leq \eta} F(m').
 $$
On the other hand, 
$$
\begin{array}{rl}
\ds J(m,w)\; = & \ds \int_0^1  \left(\inte  L(x,\frac{w}{m}) m dx  +F(m(t)) \right) dt + (T-1) \left(\inte  L(x, \frac{\bar{w}}{\bar{m}}) \bar{m}dx+ F(\bar m)\right)\\
< & \ds \ep \theta T/2+(T-1) \ep\theta/2 +(T-1) \max_{ d_1(m',\mu)\leq \eta} F(m') \; <\;  J(\hat{m},\hat{w}).
\end{array}
$$
This proves that $(\hat m,\hat w)$ is not a minimizer of $J$. As, by Proposition \ref{prop:minMFG}, $J$ has a minimum which is associated with a solution of the MFG system \eqref{MFGpropUNICITA}, there exists a solution to the MFG system \eqref{MFGpropUNICITA} different from $(\hat u,\hat m)$.
\end{proof}

\section{Stable solution to the MFG system}\label{sec:stable}

\subsection{Definition and basic property}

We say that a solution of the MFG system \eqref{eq:MFG} is stable if the unique solution to linearized system is the trivial one. 

\begin{Definition} Let $(u,m)$ be a solution to the MFG system \eqref{eq:MFG} with initial condition $(t_0,m_0)\in [0,T]\times \Pk$. We say that the solution $(u,m)$ is stable if $(v,\mu)=(0,0)$ is the unique solution to the linearized system 
$$
\left\{\begin{array}{l}
-\partial_t v -\Delta v +D_pH(x,Du)\cdot Dv=\frac{\delta f}{\delta m}(x,m)(\mu) \; {\rm in}\; \T^d\times (t_0,T),\\
\partial_t \mu -\Delta \mu - \dive(\mu D_pH(x, Du))-\dive (mD^2_{pp}H(x,Du)Dv)= 0 \; {\rm in}\; \T^d\times (t_0,T) \\
\mu(x,t_0)=0, \; v(x,T)=\frac{\delta g}{\delta m}(x,m(T))(\mu(T))  \; {\rm in}\; \T^d.
\end{array}\right.
$$
\end{Definition}

Before proving that stable solutions do exist, we show that they are isolated. 

\begin{Proposition}\label{prop:stable} Let $(u,m)$ be a stable solution starting from an initial position $(t_0,m_0)\in [0,T]\times \Pk$. Then there is $\eta>0$ such that, for any $m_0^1\in \Pk$ with $\|m_0^1-m(t_0)\|_{C^0}\leq \eta$, there exists at most one solution $(u_1,m_1)$ of the MFG system 
$$
\left\{\begin{array}{l}
-\partial_t u_1 -\Delta u_1 +H(x,Du_1)=f(x,m_1) \; {\rm in}\; \T^d\times (t_0,T), \\
\partial_t m_1 -\Delta m_1 - \dive(m_1D_pH(x, Du_1))= 0 \; {\rm in}\; \T^d\times (t_0,T) \\
m_1(x,t_0)=m_0^1(x), \; u_1(x,T)=g(x,m_1(T))  \; {\rm in}\; \T^d.
\end{array}\right.
$$
 such that 
$$
\|(u,m)-(u_1,m_1)\|_{C^{1,0}\times C^0} \leq \eta.
$$ 
\end{Proposition}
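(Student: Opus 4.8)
The plan is to argue by contradiction through a normalization-and-compactness scheme: I will show that if solutions accumulate near $(u,m)$, then rescaling their differences produces a nontrivial solution of the linearized system with zero initial measure, contradicting stability. Assume the statement fails. Then for each $n$ there are $m_0^n\in\Pk$ with $\|m_0^n-m(t_0)\|_{C^0}\le 1/n$ and two \emph{distinct} solutions $(u_1^n,m_1^n)$ and $(u_2^n,m_2^n)$ of the MFG system started from $(t_0,m_0^n)$, both within $1/n$ of $(u,m)$ for the $C^{1,0}\times C^0$ norm; in particular $Du_i^n\to Du$ and $m_i^n\to m$ uniformly. Set $v^n:=u_2^n-u_1^n$ and $\mu^n:=m_2^n-m_1^n$. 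As the two solutions share the initial measure $m_0^n$, we have $\mu^n(\cdot,t_0)=0$; as they are distinct, $(v^n,\mu^n)\ne(0,0)$, so $\theta_n:=\|v^n\|_{C^{1,0}}+\|\mu^n\|_{C^0}>0$.

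Next I subtract the two copies of the MFG system and use first-order Taylor expansions with integral remainders to write the linear forward-backward system solved by $(v^n,\mu^n)$:
\[ -\partial_t v^n-\Delta v^n+b^n\cdot Dv^n=c^n[\mu^n],\qquad \partial_t\mu^n-\Delta\mu^n-\dive(\mu^n D_pH(x,Du_2^n))-\dive(m_1^n A^n Dv^n)=0, \]
with $\mu^n(\cdot,t_0)=0$ and $v^n(\cdot,T)=\int_0^1\frac{\delta g}{\delta m}(x,m_1^n(T)+s\mu^n(T))(\mu^n(T))\,ds$, where $b^n:=\int_0^1 D_pH(x,Du_1^n+sDv^n)\,ds$, $A^n:=\int_0^1 D^2_{pp}H(x,Du_1^n+sDv^n)\,ds$ and $c^n[\phi]:=\int_0^1\frac{\delta f}{\delta m}(x,m_1^n+s\mu^n)(\phi)\,ds$. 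Since $Du_i^n\to Du$ and $m_i^n\to m$ uniformly, these coefficients converge to those of the linearized system: $b^n\to D_pH(x,Du)$, $A^n\to D^2_{pp}H(x,Du)$, $m_1^n\to m$, and $c^n[\,\cdot\,]$ and the terminal operator converge to $\frac{\delta f}{\delta m}(x,m)(\,\cdot\,)$ and $\frac{\delta g}{\delta m}(x,m(T))(\,\cdot\,)$.

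I then normalize: put $(\tilde v^n,\tilde\mu^n):=(v^n,\mu^n)/\theta_n$, so $\|\tilde v^n\|_{C^{1,0}}+\|\tilde\mu^n\|_{C^0}=1$. The pair $(\tilde v^n,\tilde\mu^n)$ solves the same system (with $c^n$ and the terminal operator now applied to $\tilde\mu^n$); its source $c^n[\tilde\mu^n]$ is uniformly bounded in $L^\infty$ (Lipschitz bounds on $f,g$ and $\|\tilde\mu^n\|_{C^0}\le1$) and all its coefficients are uniformly bounded ($H\in C^2$ and $Du_i^n$ bounded). Parabolic Schauder and $L^p$ estimates, uniform in $n$, applied to the backward equation for $\tilde v^n$ give a uniform $C^{1,\alpha}$ bound; feeding the resulting Hölder vector field $m_1^n A^n D\tilde v^n$ into the divergence-form equation for $\tilde\mu^n$ gives a uniform $C^{0,\alpha}$ bound. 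These embed compactly into $C^{1,0}$ and $C^0$, so along a subsequence $(\tilde v^n,\tilde\mu^n)\to(\tilde v,\tilde\mu)$ in $C^{1,0}\times C^0$. As $D\tilde v^n\to D\tilde v$ and $m_1^n A^n\to m D^2_{pp}H(x,Du)$ uniformly, the weak form of each equation passes to the limit and $(\tilde v,\tilde\mu)$ solves exactly the linearized system with $\tilde\mu(\cdot,t_0)=0$ (the terminal condition passing to $\frac{\delta g}{\delta m}(x,m(T))(\tilde\mu(T))$ because $\mu^n(T)\to0$). Strong convergence preserves the normalization, $\|\tilde v\|_{C^{1,0}}+\|\tilde\mu\|_{C^0}=1$, hence $(\tilde v,\tilde\mu)\ne(0,0)$ --- contradicting the stability of $(u,m)$.

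I expect the main obstacle to be the a priori estimates of the third step. They must be uniform in $n$ and of the right strength: a uniform $C^{1,\alpha}$ bound on $\tilde v^n$ is needed so that the coupling field $m_1^n A^n D\tilde v^n$ is Hölder continuous and the Kolmogorov equation returns a compact $C^{0,\alpha}$ bound on $\tilde\mu^n$, while the stronger bounds must embed compactly into the normalizing norm $C^{1,0}\times C^0$, so that the unit normalization survives in the limit and forces $(\tilde v,\tilde\mu)\ne(0,0)$. The forward-backward coupling prevents estimating the two equations independently; but after differentiation the system is genuinely linear with uniformly bounded coefficients and controlled boundary data, so standard parabolic regularity applies with constants independent of $n$.
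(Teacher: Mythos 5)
Your proof is correct and follows essentially the same route as the paper's: a contradiction argument in which the difference of two nearby distinct solutions, normalized by its $C^{1,0}\times C^0$ norm, is shown via uniform parabolic estimates and compactness to converge to a unit-norm solution of the linearized system with zero initial measure, contradicting stability. The only cosmetic difference is that you write the difference system with explicit integral-remainder (Taylor) coefficients converging to the linearized ones, whereas the paper treats the nonlinear differences as bounded right-hand sides and identifies the limit equation afterwards.
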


\begin{proof} We argue by contradiction, assuming that there exists two distinct solutions $(u^{n,1},m^{n,1})$ and  $(u^{n,2},m^{n,2})$ of the MFG system with $m^{n,1}(\cdot,t_0)= m^{n,2}(\cdot,t_0)$ and converging to $(u,m)$ as $n\to +\infty$ in $C^{1,0}\times C^0$. 

We set 
$$
(v^n, \mu^n)= (\rho^n)^{-1}\left((u^{n,2},m^{n,2})- (u^{n,1},m^{n,1})\right), 
$$
where
$$
\rho^n:= \|(u^{n,2},m^{n,2})- (u^{n,1},m^{n,1})\|_{C^{1,0}\times C^0}.
$$
We note that the pair $(v^n, \mu^n)$ solves
$$
\left\{\begin{array}{l}
\ds -\partial w-\Delta w +g^n=0  \; {\rm in}\; \T^d\times (t_0,T),\\
\ds \partial \mu-\Delta \mu +\dive(h^n)=0 \; {\rm in}\; \T^d\times (t_0,T),
\end{array}\right.
$$
where 
$$
g^n(x,t)= (\rho^n)^{-1} \left( H(x,Du^{n,2}(x,t))-H(x,Du^{n,1}(x,t)) -f(x,m^{n,2}(t))+f(x,m^{n,1}(t))\right)
$$
and 
$$
h^n(x,t)= (\rho^n)^{-1}\left(- m^{2,n}(x,t)D_pH(x, Du^{n,2}(x,t))+m^{1,n}(x,t)D_pH(x, Du^{n,1}(x,t))\right).
$$
Note that $\|g^n\|_\infty$ and $\|h^n\|_\infty$ are bounded by definition of $\rho^n$. Thus, by standard arguments in uniform parabolic equations
(see for instance, Theorem 9.1 and Theorem 10.1 in \cite{LSU}), there exists $\alpha\in (0,1)$ such that $(v^n)$ is bounded in $C^{1+\alpha, (1+\alpha)/2}$ while $(\mu^n)$ is bounded in $C^{\alpha,\alpha/\varepsilon}$. Thus, up to a subsequence denoted in the same way, $(v^n,\mu^n)$ converges in $C^{1,0}\times C^0$ to some $(v,\mu)$. Note that
$$
\|(v,\mu)\|_{C^{1,0}\times C^0}=1
$$
and that  $(v,\mu)$ is a (a priori weak) solution to the linearized problem
$$
\left\{\begin{array}{l}
-\partial_t v -\Delta v +D_pH(x,Du)\cdot Dv=\frac{\delta f}{\delta m}(x,m)(\mu) \; {\rm in}\; \T^d\times (t_0,T), \\
\partial_t \mu -\Delta \mu - \dive(\mu D_pH(x, Du))-\dive (mD^2_{pp}H(x,Du)Dv)= 0 \; {\rm in}\; \T^d\times (t_0,T) \\
\mu(x,t_1)=0, \; v(x,T)=\frac{\delta g}{\delta m}(x,m(T))(\mu(T))  \; {\rm in}\; \T^d.
\end{array}\right.
$$
By parabolic regularity, $(v,\mu)$ is actually a classical solution to the above equation. As $(u,m)$ is a stable solution, one must have $(v,\mu)=(0,0)$. This leads to a contradiction with the fact that $\|(v,\mu)\|_{C^{1,0}\times C^0}=1$. 
\end{proof}

\subsection{Existence of stable solutions for potential MFG systems}

The next result states that, if $(u,m)$ is a solution to a potential MFG system on the time interval $[t_0,T]$ which corresponds to a minimizer of $J$, then the restriction of $(u,m)$ to any subinterval $[t_1,T]$ (where $t_1\in (t_0,T)$) is a stable solution of the MFG system. 

\begin{Theorem}\label{theo:stable} Let us assume that the game is potential, i.e, satisfies \eqref{eq:potential} for some $C^1$ maps $F,G:\Pk\to\R$. Let $(m,w)$ be  minimum of $J(t_0,m_0,\cdot,\cdot)$ for a given initial condition $m_0\in\Pk$ at time $t_0\in [0,T]$. Let $u$ be such that $(u,m)$ is a solution to the MFG system \eqref{eq:MFG}. Then, for any $t_1\in (t_0,T)$, the restriction of $(u,m)$ to the time interval $[t_1,T]$ is a stable solution to the MFG system. 
\end{Theorem}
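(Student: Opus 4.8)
The plan is to exploit the potential structure twice: first to recognize the second variation of $J$ at the minimizer as a quadratic functional whose Euler--Lagrange system is exactly the linearized MFG system, and then to combine the minimality of $(m,w)$ on the \emph{whole} interval $[t_0,T]$ with the backward uniqueness Theorem~\ref{theo:continuation}. Throughout, for an admissible perturbation $(\mu,\nu)$ of $(m,w)$ (i.e. one satisfying $\partial_t\mu-\Delta\mu+\dive(\nu)=0$) associate to it the quantity $v$ defined by $\nu=-\mu D_pH(x,Du)-mD^2_{pp}H(x,Du)Dv$, and set
\[
\mathcal{A}_{[s,T]}(v,\mu):=\frac12\int_s^T\inte\Big(mD^2_{pp}H(x,Du)Dv\cdot Dv+\frac{\delta f}{\delta m}(x,m)(\mu)\,\mu\Big)\,dx\,dt+\frac12\inte\frac{\delta g}{\delta m}(x,m(T))(\mu(T))\,\mu(T)\,dx .
\]
A direct differentiation (using $\frac{\delta F}{\delta m}=f$, $\frac{\delta G}{\delta m}=g$, and $D^2_{qq}L=(D^2_{pp}H)^{-1}$ at conjugate points) shows that $2\mathcal{A}_{[t_0,T]}$ is the second variation $\frac{d^2}{ds^2}J\big((m,w)+s(\mu,\nu)\big)\big|_{s=0}$ and that the critical point system of $\mathcal{A}$, with $v$ as adjoint variable, is precisely the linearized system appearing in the definition of stability.

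First I would record an energy identity. If $(v,\mu)$ solves the linearized system on $[t_1,T]$ with $\mu(t_1)=0$, I test the $v$-equation against $\mu$ and the $\mu$-equation against $v$, subtract, and integrate by parts in space and time (all solutions being classical by parabolic regularity, as in Proposition~\ref{prop:stable}). The Laplacian terms and the two drift terms cancel pairwise, the time-boundary term at $t_1$ vanishes because $\mu(t_1)=0$, and the terminal coupling $v(T)=\frac{\delta g}{\delta m}(x,m(T))(\mu(T))$ converts the boundary term at $T$ into the $G$-contribution. The outcome is exactly $\mathcal{A}_{[t_1,T]}(v,\mu)=0$.

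Second I would bring in minimality on the \emph{larger} interval. Since $(m,w)$ minimizes $J(t_0,m_0,\cdot,\cdot)$ and admissible perturbations vanishing at $t_0$ preserve mass and keep $m+s\mu>0$ for small $s$ (as $m$ is a positive smooth density on $\T^d$), the necessary second-order condition gives $\mathcal{A}_{[t_0,T]}\ge0$ on all such directions. Now extend $(v,\mu,\nu)$ by zero on $[t_0,t_1)$; because $\mu(t_1)=0$ the extension $(\tilde v,\tilde\mu,\tilde\nu)$ is still admissible (the continuity equation holds across $t_1$, only the control $\nu$ jumping there) and $\mathcal{A}_{[t_0,T]}(\tilde v,\tilde\mu)=\mathcal{A}_{[t_1,T]}(v,\mu)=0$. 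Hence $(\tilde v,\tilde\mu)$ minimizes the nonnegative quadratic functional $\mathcal{A}_{[t_0,T]}$, so it solves the corresponding optimality system; let $\hat v$ be the associated adjoint, a classical (hence time-continuous) solution of the backward equation on all of $[t_0,T]$. On $[t_1,T]$ the optimality relation forces $D\hat v=Dv$, while on $[t_0,t_1)$ the optimal control is zero, so $-mD^2_{pp}H(x,Du)D\hat v=0$ and thus $D\hat v\equiv0$ there; continuity of $\hat v$ at $t_1$ then yields the crucial extra datum $Dv(\cdot,t_1)=0$.

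Finally I would close with backward uniqueness. Having both $\mu(\cdot,t_1)=0$ and $Dv(\cdot,t_1)=0$, I differentiate the $v$-equation in space and set $W:=Dv$. Exactly as in the proof of Proposition~\ref{uniqueMFG}, the pair $(W,\mu)$ then solves a linear forward-backward system whose source terms are pointwise bounded by $C(|W|+|DW|+|\mu|+|D\mu|)$, using $\|Du\|_\infty+\|D^2u\|_\infty+\|m\|_\infty+\|Dm\|_\infty\le C$ and the Lipschitz continuity of $\frac{\delta f}{\delta m}$. Theorem~\ref{theo:continuation} gives $W\equiv0$ and $\mu\equiv0$ on $[t_1,T]$; with $\mu\equiv0$, the function $v$ solves the homogeneous backward equation $-\partial_t v-\Delta v+D_pH(x,Du)\cdot Dv=0$ with terminal data $v(T)=\frac{\delta g}{\delta m}(x,m(T))(0)=0$, hence $v\equiv0$. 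Therefore $(v,\mu)=(0,0)$, which is the claimed stability. I expect the main obstacle to be the middle step: rigorously identifying $\mathcal{A}$ as the second variation and showing that the zero-extension, although it only minimizes a \emph{merely nonnegative} (not strictly convex) quadratic functional, genuinely produces a time-continuous adjoint across $t_1$ — it is precisely this continuity, available only because $t_1$ lies strictly inside $(t_0,T)$, that supplies the extra condition $Dv(\cdot,t_1)=0$ needed to trigger backward uniqueness.
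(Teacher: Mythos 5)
Your proposal is correct and follows essentially the same route as the paper: the quadratic functional $\mathcal{A}$ is the paper's second variation $\mathcal{J}$, your energy identity and optimality-system step together constitute the paper's Lemma \ref{lem:equiv}, and the zero-extension to $[t_0,T]$, the continuity of the adjoint's gradient at $t_1$ forcing $Dv(\cdot,t_1)=0$, and the final application of Theorem \ref{theo:continuation} to $(Dv,\mu)$ reproduce the paper's proof of Theorem \ref{theo:stable} step by step. The one part you flag as delicate — deriving the adjoint characterization from minimization of the merely nonnegative quadratic functional — is exactly what the paper carries out in the converse direction of Lemma \ref{lem:equiv}, via first-order conditions and the symmetry relation \eqref{rel.sym}.
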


Note that the Theorem implies that there are many stable solutions to the MFG system. In particular, the set of initial conditions $(t_0,m_0)$ for which there is a stable solution is dense. 
\bigskip

We now start the proof of Theorem \ref{theo:stable}.
We have to show that,  for any $t_1\in (t_0,T)$, $(v,\mu):=(0,0)$ is the unique classical solution to the linearized system 
\be\label{eq.LS}
\left\{\begin{array}{l}
\ds -\partial_t v -\Delta v +D_pH(x,Du)\cdot Dv=\frac{\delta f}{\delta m}(x,m)(\mu) \; {\rm in}\; \T^d\times (t_1,T), \\
\ds \partial_t \mu -\Delta \mu - \dive(\mu D_pH(x, Du))-\dive (mD^2_{pp}H(x,Du)Dv)= 0 \; {\rm in}\; \T^d\times (t_1,T) \\
\ds \mu(x,t_1)=0, \; v(x,T)=\frac{\delta g}{\delta m}(x,m(T))(\mu(T))  \; {\rm in}\; \T^d.
\end{array}\right.
\ee
The proof requires several steps. 

We begin by computing the directional derivative of the map $J$. For this, let us fix an admissible direction $(\mu, z)$. We assume for a while that $(\mu, z)$ is smooth and satisfies
\be\label{eq.cont}
\partial_t \mu-\Delta \mu +\dive(z)= 0 \; {\rm in }\; \T^d\times (t_0,T), \qquad \mu(t_0)=0.
\ee
We also assume that $\mu=0$ and $z=0$ on the time interval $[t_0, t_0+\ep]$ for some $\ep>0$. 
We then consider the map $h\to J((m,w)+h(\mu,z))$ for $|h|$ small. As $m>0$ on $(t_0, T]$ and $\mu=0$ on $[t_0,t_0+\ep]$, we have $m+h\mu>0$ for $|h|$ small enough. 
Then, by optimality of $(m,w)$ for $J$, we have: 
$$
\frac{d}{dh}_{|_{h=0}} J((m,w)+h(\mu,z))=0, \qquad \frac{d^2}{dh^2}_{|_{h=0}} J((m,w)+h(\mu,z))\geq 0, 
$$
where 
$$
\begin{array}{rl}
\ds \frac{d}{dh}_{|_{h=0}} J((m,w)+h(\mu,z)) \; = & \ds  
\int_{t_0}^T\inte  \mu L(x, \frac{w}{m})+ m D_qL\left(x, \frac{w}{m}\right)\cdot (\frac{z}{m}-\frac{\mu w}{m^2}) 
\\
& \ds + \int_{t_0}^T\inte  f(x,m(t))\mu 
+
\inte g(x,m(T))\mu(T)
\end{array}
$$
and 
$$
\begin{array}{l}
\ds \frac{d^2}{dh^2}_{|_{h=0}} J((m,w)+h(\mu,z)) \\ 
\qquad = \ds 
\int_{t_0}^T\inte  2\mu D_qL(x, \frac{w}{m})\cdot (\frac{z}{m}-\frac{\mu w}{m^2}) +m D_qL\left(x, \frac{w}{m}\right)\cdot (-\frac{\mu z}{m^2}+2\frac{\mu^2 w}{m^3}-\frac{\mu z}{m^2})
 \\
\qquad \qquad  \ds 
+ \int_{t_0}^T\inte  m D^2_{qq}L\left(x, \frac{w}{m}\right) (\frac{z}{m}-\frac{\mu w}{m^2}) \cdot (\frac{z}{m}-\frac{\mu w}{m^2}) \\
\qquad \qquad  \ds + \int_{t_0}^T\inte \inte \frac{\delta f}{\delta m} (x,m(t),y)\mu(x,t)\mu(y,t) +
\inte \inte \frac{\delta g}{\delta m} (x,m(T),y)\mu(x,T)\mu(y,T).
\end{array}
$$
Recalling that $w= -mD_pH(x,Du)$, we can rearrange the expressions: 
$$
\begin{array}{rl}
\ds \frac{d}{dh}_{|_{h=0}} J((m,w)+h(\mu,z)) \; = & \ds  
\int_{t_0}^T\inte  \mu L(x, \frac{w}{m})+ D_qL\left(x, \frac{w}{m}\right)\cdot (z+ \mu D_pH(x,Du))
 \\
& \ds +\int_{t_0}^T\inte  f(x,m(t))\mu
+
\inte g(x,m(T))\mu(T)
\end{array}
$$
and 
$$
\begin{array}{l}
\ds \frac{d^2}{dh^2}_{|_{h=0}} J((m,w)+h(\mu,z)) \\ 
= \ds 
\int_{t_0}^T\inte   m^{-1} D^2_{qq}L\left(x, \frac{w}{m}\right) (z+  \mu D_pH(x,Du)) \cdot (z+  \mu D_pH(x,Du)) \\
\qquad \qquad  \ds + \int_{t_0}^T\inte \inte \frac{\delta f}{\delta m} (x,m(t),y)\mu(x,t)\mu(y,t) +
\inte \inte \frac{\delta g}{\delta m} (x,m(T),y)\mu(x,T)\mu(y,T).
\end{array}
$$
We denote by ${\mathcal J}(t_0,m,u; \mu,z)$ the right-hand side of the above expression: 
$$
\begin{array}{l}
{\mathcal J}(t_0,m,u; \mu,z)\\
\qquad \ds := \int_{t_0}^T\inte   m^{-1} D^2_{qq}L\left(x, \frac{w}{m}\right) (z+  \mu D_pH(x,Du)) \cdot (z+  \mu D_pH(x,Du)) \\
\qquad \qquad  \ds + \int_{t_0}^T\inte \inte \frac{\delta f}{\delta m} (x,m(t),y)\mu(x,t)\mu(y,t) +
\inte \inte \frac{\delta g}{\delta m} (x,m(T),y)\mu(x,T)\mu(y,T).
\end{array}
$$
Note that ${\mathcal J}(t_0,m,u; \mu,z)$ is defined for $\mu,z\in L^2(\T^d\times (0,T))$. 
By regularization, one has therefore: 
$$
{\mathcal J}(t_0,m,u;\mu,z) \geq 0 
$$
for any $\mu,z\in L^2(\T^d\times (t_0,T))$ such that \eqref{eq.cont} holds in the sense of distribution. 
Let us also recall \cite{CDLL} that the map $(x,y)\to \frac{\delta f}{\delta m}(x,m,y)$ is not symmetric, but satisfies the relation: 
$$
\frac{\delta f}{\delta m}(x,m,y)= \frac{\delta f}{\delta m}(y,m,x)+f(x,m)-f(y,m). 
$$
So, for any $\mu,\mu'$ such that $\ds \inte \mu=\inte \mu'=0$, we have 
\be\label{rel.sym}
\inte\inte  \frac{\delta f}{\delta m}(x,m,y)\mu(x)\mu'(y)= \inte\inte \frac{\delta f}{\delta m}(x,m,y)\mu(y)\mu(x). 
\ee

\begin{Lemma} \label{lem:equiv} Let $t_1\in [t_0, T)$. For any $(\mu,z)\in L^2(\T^d\times (t_1,T))$ such that 
\be\label{eq.cont.t1}
\partial_t \mu-\Delta \mu +\dive(z)= 0 \; {\rm in }\; \T^d\times (t_1,T), \qquad \mu(t_1)=0,
\ee
equality $
{\mathcal J}(t_1, u,m;\mu,z) = 0 $ holds if and only if there exists $v\in C^0(\T^d\times (t_1,T))$ such that the pair $(v,\mu)$ is a solution to the linearized problem \eqref{eq.LS} and 
$$
z=-\mu D_pH(x,Du)-mD^2_{pp}H(x,Du)Dv. 
$$ 
\end{Lemma}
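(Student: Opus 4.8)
The plan is to read $\mathcal{J}(t_1,m,u;\cdot,\cdot)$ as a quadratic form on the linear space $\mathcal{A}_{t_1}$ of pairs $(\mu,z)\in L^2(\T^d\times(t_1,T))$ satisfying the constraint \eqref{eq.cont.t1}, and to recognize the linearized system \eqref{eq.LS} as its Euler--Lagrange equations. Two preliminary remarks organize the computation. First, since $w=-mD_pH(x,Du)$ by Proposition \ref{prop:minMFG}, the conjugacy between $L$ and $H$ gives $D^2_{qq}L(x,w/m)=[D^2_{pp}H(x,Du)]^{-1}$, so the leading term of $\mathcal{J}$ reads $\int_{t_1}^T\inte m^{-1}[D^2_{pp}H(x,Du)]^{-1}(z+\mu D_pH)\cdot(z+\mu D_pH)$. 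Second, integrating \eqref{eq.cont.t1} over $\T^d$ shows every admissible $\mu$ has zero spatial mass for a.e. $t$, so by \eqref{rel.sym} the symmetric bilinear form $B$ associated with $\mathcal{J}$ acts on the coupling kernels as $\int\int \frac{\delta f}{\delta m}(x,m,y)\mu(x)\mu'(y)=\inte \frac{\delta f}{\delta m}(x,m)(\mu)\,\mu'(x)\,dx$ (and similarly for $g$).

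For the reverse implication ($\Leftarrow$), I would take $v$ as given with $(v,\mu)$ solving \eqref{eq.LS} and $z=-\mu D_pH-mD^2_{pp}H Dv$, test the first line of \eqref{eq.LS} against $\mu$, and integrate by parts in space--time, using $\mu(t_1)=0$, the terminal condition $v(T)=\frac{\delta g}{\delta m}(x,m(T))(\mu(T))$, and \eqref{eq.cont.t1} written as $\partial_t\mu-\Delta\mu=-\dive(z)$. This produces the identity
\[
\int_{t_1}^T\inte (z+\mu D_pH)\cdot Dv=\int_{t_1}^T\inte \frac{\delta f}{\delta m}(x,m)(\mu)\mu+\inte\frac{\delta g}{\delta m}(x,m(T))(\mu(T))\mu(T).
\]
Since $z+\mu D_pH=-mD^2_{pp}H Dv$ makes the leading term of $\mathcal{J}$ equal to $\int_{t_1}^T\inte mD^2_{pp}H Dv\cdot Dv$, substituting the displayed identity shows the two contributions cancel and $\mathcal{J}(t_1,m,u;\mu,z)=0$.

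For the direct implication ($\Rightarrow$), the key starting point is that $\mathcal{J}(t_1,m,u;\cdot,\cdot)\geq 0$ on $\mathcal{A}_{t_1}$: any admissible $(\mu,z)$ extends by zero to $[t_0,t_1)$ into an admissible direction for $\mathcal{J}(t_0,\dots)$, for which nonnegativity was already established. For a nonnegative quadratic form, $\mathcal{J}(\mu,z)=0$ forces $B((\mu,z),(\mu',z'))=0$ for every $(\mu',z')\in\mathcal{A}_{t_1}$; writing this out gives, for all admissible $(\mu',z')$,
\[
\int_{t_1}^T\inte m^{-1}[D^2_{pp}H]^{-1}(z+\mu D_pH)\cdot(z'+\mu'D_pH)+\int_{t_1}^T\inte \frac{\delta f}{\delta m}(x,m)(\mu)\mu'+\inte\frac{\delta g}{\delta m}(x,m(T))(\mu(T))\mu'(T)=0.
\]
I would then define $v$ as the (unique, $C^{2,\alpha}$ by Schauder theory since $u\in C^{2,\alpha}$) solution of the backward equation in \eqref{eq.LS} with terminal data $\frac{\delta g}{\delta m}(x,m(T))(\mu(T))$, test it against an arbitrary admissible $\mu'$ to obtain the analogue of the identity displayed in the previous paragraph with the test pair, and combine the two relations. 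This eliminates the $f,g$ terms and leaves $\int_{t_1}^T\inte \Xi\cdot(z'+\mu'D_pH)=0$ for all admissible $(\mu',z')$, where $\Xi:=m^{-1}[D^2_{pp}H]^{-1}(z+\mu D_pH)+Dv$; proving $\Xi\equiv 0$ yields exactly $z=-\mu D_pH-mD^2_{pp}H Dv$, after which the Kolmogorov line of \eqref{eq.LS} is merely \eqref{eq.cont.t1} rewritten.

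The main obstacle, and the step I would treat most carefully, is extracting $\Xi\equiv 0$ from the weak relation $\int \Xi\cdot(z'+\mu'D_pH)=0$: the constraint \eqref{eq.cont.t1} ties $\mu'$ to $z'$, so $z'$ cannot simply be chosen arbitrarily. To decouple them I would solve the backward heat equation $-\partial_t\phi-\Delta\phi=\Xi\cdot D_pH$ with $\phi(T)=0$, and integrate by parts to rewrite $\int_{t_1}^T\inte(\Xi\cdot D_pH)\mu'=\int_{t_1}^T\inte D\phi\cdot z'$ (the boundary terms vanishing because $\phi(T)=0$ and $\mu'(t_1)=0$), so that the relation becomes $\int_{t_1}^T\inte(\Xi+D\phi)\cdot z'=0$ for every $z'\in L^2$, i.e. $\Xi=-D\phi$. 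Feeding this back into the equation for $\phi$ shows $\phi$ solves the homogeneous equation $-\partial_t\phi-\Delta\phi+D_pH(x,Du)\cdot D\phi=0$ with $\phi(T)=0$, whence $\phi\equiv 0$ by backward uniqueness and therefore $\Xi\equiv 0$, completing the proof.
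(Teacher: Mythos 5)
Your proof is correct, and its overall skeleton coincides with the paper's: the ($\Leftarrow$) computation via \eqref{eq.D2LD2H} and the duality identity, the nonnegativity of ${\mathcal J}(t_1,\cdot)$, the polarization/first-order condition, the introduction of $v$ through the backward equation, and the reduction to the orthogonality relation $\int_{t_1}^T\inte \Xi\cdot(z'+\mu'D_pH)=0$ are exactly the paper's steps (the paper obtains nonnegativity at $t_1$ from dynamic programming rather than your extension-by-zero, but it uses precisely your extension trick in the proof of Theorem \ref{theo:stable}, so this difference is immaterial). Where you genuinely diverge is the decoupling step that you rightly single out as the main obstacle. The paper resolves it in one stroke by a better choice of test directions: for arbitrary $\alpha\in L^2$ it solves the Kolmogorov-type equation $\partial_t\mu'-\Delta\mu'-\dive(\mu'D_pH(x,Du))+\dive(\alpha)=0$ with $\mu'(t_1)=0$ and sets $z':=-\mu'D_pH(x,Du)+\alpha$; this pair is admissible and satisfies $z'+\mu'D_pH=\alpha$, so the relation reads $\inte\int_{t_1}^T\Xi\cdot\alpha=0$ for every $\alpha\in L^2$, giving $\Xi\equiv 0$ immediately. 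Your route instead parametrizes admissible pairs by $z'$ (solving a plain heat equation for $\mu'$), converts the $\mu'$-term into a $z'$-term through the adjoint function $\phi$, deduces $\Xi=-D\phi$, and then bootstraps: $\phi$ solves a homogeneous backward drift-diffusion equation with zero terminal data, hence vanishes. This is valid, with two small caveats: the final step is ordinary uniqueness for a well-posed terminal-value problem (reverse time and apply energy estimates), not ``backward uniqueness'' in the Lions--Malgrange sense invoked elsewhere in the paper; and your claim that $v$ is $C^{2,\alpha}$ by Schauder theory is more than can be asserted when $\mu$ is merely $L^2$ (the paper claims only continuity of $v$). Neither affects the argument. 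What the paper's construction buys is brevity --- no auxiliary $\phi$, no bootstrap; what yours buys is that the test-pair construction needs only the heat equation rather than a divergence-form equation with drift, at the price of one extra PDE and a uniqueness step.
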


\begin{proof} Let us first assume that $(v,\mu)$ is a solution to the linearized system \eqref{eq.LS} and let us set $z=-\mu D_pH(x,Du)-mD^2_{pp}H(x,Du)Dv$. Then $(\mu, z)$ satisfies \eqref{eq.cont.t1} and 
$$
\begin{array}{l}
\ds{\mathcal J}(t_1,u,m;\mu, z)\\ 
\qquad = \ds 
\int_{t_1}^T\inte m D^2_{qq}L\left(x, \frac{w}{m}\right) D^2_{pp}H(x,Du)Dv \cdot D^2_{pp}H(x,Du)Dv \\
\qquad \qquad  \ds + \int_{t_1}^T\inte \inte \frac{\delta f}{\delta m} (x,m(t),y)\mu(x,t)\mu(y,t) +
\inte \inte \frac{\delta g}{\delta m} (x,m(T),y)\mu(x,T)\mu(y,T).
\end{array}
$$
As $H$ is uniformly convex in the gradient variable and $w=-mD_pH(x,Du)$, we have 
\be\label{eq.D2LD2H}
D^2_{qq}L\left(x, \frac{w}{m}\right) D^2_{pp}H(x,Du) = I_d, 
\ee
so that 
$$
\begin{array}{l}
\ds{\mathcal J}(t_1,u,m;\mu, z)\\ 
\qquad = \ds 
\int_{t_1}^T\inte  m  D^2_{pp}H(x,Du)Dv\cdot Dv \\
\qquad  \ds + \int_{t_1}^T\inte \inte \frac{\delta f}{\delta m} (x,m(t),y)\mu(x,t)\mu(y,t) +
\inte \inte \frac{\delta g}{\delta m} (x,m(T),y)\mu(x,T)\mu(y,T) \; = \; 0,
\end{array}
$$
where the last equality is obtained by integrating the first equation in \eqref{eq.LS} multiplied by $\mu$ and adding it to the second one multiplied by $v$.\\

Conversely let us assume that ${\mathcal J}(t_1,u,m;\mu,z) = 0 $ holds for some $(\mu,z)$ satisfying \eqref{eq.cont.t1}. Let $v$ be the (continuous) solution to 
$$
\left\{\begin{array}{l}
\ds -\partial_t v -\Delta v +D_pH(x,Du)\cdot Dv=\frac{\delta f}{\delta m}(x,m)(\mu) \; {\rm in}\; \T^d\times (t_1,T), \\
\ds v(x,T)=\frac{\delta g}{\delta m}(x,m(T))(\mu(T))  \; {\rm in}\; \T^d.
\end{array}\right.
$$
We first claim  
that $(\mu,z)$ is a minimum point of 
${\mathcal J}(t_1,u,m; \cdot,\cdot)$ under the constraint \eqref{eq.cont.t1}. Indeed, by dynamic programming, $(m,w)$ is optimal for $J(t_1, m(t_1); \cdot, \cdot)$. So 
${\mathcal J}(t_1,u,m; \mu',z')\geq 0= {\mathcal J}(t_1,u,m; \mu,z)$ for any $(\mu',z')\in L^2$ such that \eqref{eq.cont.t1} holds, which proves the claim. 

Next we prove that $(v,\mu)$ is a solution to the linearized system \eqref{eq.LS}. As $(\mu,z)$ is a minimum point of the quadratic map
${\mathcal J}(t_1,u,m; \cdot,\cdot)$ under the constraint \eqref{eq.cont.t1}, we have by first order necessary condition and  for any $(\mu',z')$ such that \eqref{eq.cont.t1} holds, 
$$
\begin{array}{l}
\ds 
\int_{t_1}^T\inte   m^{-1} D^2_{qq}L\left(x, \frac{w}{m}\right) (z+  \mu D_pH(x,Du)) \cdot (z'+  \mu' D_pH(x,Du)) \\
\qquad  \ds + \int_{t_1}^T\inte \inte \frac{\delta f}{\delta m} (x,m(t),y)\mu(x,t)\mu'(y,t) +
\inte \inte \frac{\delta g}{\delta m} (x,m(T),y)\mu(x,T)\mu'(y,T) \; = \;0.
\end{array}
$$
(we used \eqref{rel.sym} in the above equality). Computing as usual  $\frac{d}{dt} \inte v\mu'$ and integrating in time we find:
$$
\inte \mu'(T) \frac{\delta g}{\delta m}(x,m(T))(\mu(T))=\int_{t_1}^T \inte \mu'( D_pH(x,Du)\cdot Dv-\frac{\delta f}{\delta m}(x,m)(\mu))
+z'\cdot Dv .
$$
Recalling \eqref{rel.sym}, this implies that 
$$
\begin{array}{l}
\ds 
\int_{t_1}^T\inte   m^{-1} D^2_{qq}L\left(x, \frac{w}{m}\right) (z+  \mu D_pH(x,Du)) \cdot (z'+  \mu' D_pH(x,Du)) 
\\
\qquad \qquad  \ds 
+ \int_{t_1}^T\inte (z'+\mu' D_pH(x,Du))\cdot Dv \; = \;0.
\end{array}
$$
Let us note that, for any $\alpha\in L^2$, there exists a unique solution $\mu'\in L^2$ to 
$$
\partial_t \mu'-\Delta \mu' -\dive (\mu' D_pH(x,Du))+\dive(\alpha)= 0 \; {\rm in }\; \T^d\times (t_1,T), \qquad \mu(t_1)=0.
$$
Then the pair $(\mu', z')$, with $z':= -\mu' D_pH(x,Du)+\alpha$, satisfies \eqref{eq.cont.t1}, so that we have 
$$
\begin{array}{l}
\ds 
\int_{t_1}^T\inte   \left(m^{-1} D^2_{qq}L\left(x, \frac{w}{m}\right) (z+  \mu D_pH(x,Du))+Dv\right) \cdot \alpha \; =\; 0
\end{array}
$$
for any $\alpha\in L^2$. Therefore 
$$
m^{-1} D^2_{qq}L\left(x, \frac{w}{m}\right) (z+  \mu D_pH(x,Du))+Dv=0,
$$
that, thanks to \eqref{eq.D2LD2H}, can be rewritten as 
$$
z= -\mu D_pH(x,Du)-m D^2_{pp}H(x,Du)Dv.
$$
This shows that the pair $(v,\mu)$ is a solution to the linearized system \eqref{eq.LS}. \\
\end{proof}

\begin{proof}[Proof of Theorem \ref{theo:stable}] Let $(v,\mu)$ be a solution of the linearized system \eqref{eq.LS}. From Lemma \ref{lem:equiv}, one has
${\mathcal J}(t_1,u,m; \mu,z)=0$, where $z= -\mu D_pH(x,Du)-mD^2_{pp}H(x,Du)Dv$. Let us extend $(\mu,z)$ to $\T^d\times [t_0,T]$ by setting $(\mu,z)=0$ on $\T^d\times [t_0,t_1]$.  Then $(\mu,z)$ satisfies \eqref{eq.cont} and ${\mathcal J}(t_0,u,m; \mu,z)=0$. Using again Lemma \ref{lem:equiv}, we know that there exists $\tilde v$ such that 
$(\tilde v, \mu)$ is a solution to the linearized system \eqref{eq.LS} on $\T^d\times (t_0,T)$ with initial condition $\mu(t_0)=0$ and $z= -\mu D_pH(x,Du)-mD^2_{pp}H(x,Du)D\tilde v$. By parabolic regularity, as $(\tilde v, \mu)$ solves  \eqref{eq.LS}, it  satisfies $\tilde v\in C^1$ and $\mu\in C^0$. Thus $z(t_1)=0=  -\mu(t_1) D_pH(x,Du(t_1))-m(t_1)D^2_{pp}H(x,Du(t_1))D\tilde v(t_1)$. Now recalling that $\mu(t_1)=0$, $m(t_1)>0$ and $D^2_{pp}H>0$, this implies that $D\tilde v(t_1)=0$.  

So the pair $(D\tilde v, \mu)$ is a classical solution to 
$$
\left\{\begin{array}{l}
\ds -\partial_t (\partial_{x_i}\tilde v) -\Delta (\partial_{x_i}\tilde v) +g_i= 0\; {\rm in}\; \T^d\times (t_1,T),\; i=1, \dots, d, \\
\ds \partial_t \mu -\Delta \mu + \dive(h)= 0 \; {\rm in}\; \T^d\times (t_1,T) \\
\ds \mu(x,t_1)=0, \; D\tilde v(x,t_1)=0 \; {\rm in}\; \T^d.
\end{array}\right.
$$
where 
$$
g_i(x,t)= \partial_{x_i}\left(D_pH(x,Du)\cdot D\tilde v-\frac{\delta f}{\delta m}(x,m)(\mu) \right)
$$
and 
$$
h(x,t)= -\mu D_pH(x, Du)-mD^2_{pp}H(x,Du)D\tilde v.
$$
Note that 
$$
\sum_{i=1}^d |g_i(x,t)|^2 \leq C\left( |D\tilde v|^2 + |D^2\tilde v|^2 + \|\mu(t)\|_{L^2}^2\right)
$$
while 
$$
|h(x,t)|^2\leq C\left(|\mu(x,t)|^2 + |D\tilde v(x,t)|^2\right)
$$
and 
$$
|\dive (h)(x,t)|^2\leq C\left(|D\tilde v|^2 + |D^2\tilde v|^2 + |\mu(x,t)|^2+|D\mu(x,t)|^2\right).
$$
Then Theorem \ref{theo:continuation} in  the Appendix states that $D\tilde v=0$ and $\mu=0$. As $(v,\mu)$ solves \eqref{eq.LS}, this also implies that $v=0$. 
\end{proof}

\section{Application to a learning procedure}\label{sec:fictitious}

In order to illustrate the notion of stable solution in potential games, we consider a learning procedure and show that this procedure converges if one starts from a neighborhood of a stable MFG equilibrium.

In \cite{CH} the following algorithm (inspired by the Fictitious Play \cite{BrownFictitiousPlay}) was introduced: let $\mu^0\in C^0([0,T],\Pk)$ and define by induction $(u^n, m^n)$ the solution to 
\be\label{eq:Induc}
\left\{ \begin{array}{l}
-\partial_t u^{n+1}-\Delta u^{n+1}+H(x, Du^{n+1})=f(x, \mu^n))\\
\partial_t m^{n+1}-\Delta m^{n+1}-\dive ( m^{n+1}D_PH(x,Du^{n+1}))=0\\
m^{n+1}(0)= m_0, \qquad u^{n+1}(T,x)= g(x,\mu^n(T))
\end{array}\right.
\ee
and
$$
\mu^{n+1}= \frac{n}{n+1} \mu^n+ \frac{1}{n+1} m^{n+1}.
$$

Following ideas of Monderer and Shapley \cite{MondrerShapley2}, it was proved in \cite{CH} that the family $(u^n,m^n)$ is compact in $C^{1,0}\times C^0$ and that any converging subsequence is a solution of the MFG system. 

Our main result is the convergence of $(u^n,m^n)$ to a stable solution $(u,m)$ as soon as $\mu^0$ is sufficiently close to $m$. This can be understood as a robustness property of the equilibrium $(u,m)$.

\begin{Theorem}\label{theo:CvLearning} Assume that the game is potential, i.e., satisfies \eqref{eq:potential} for some $C^1$ maps $F,G:\Pk\to\R$.  Let $(u,m)$ be a solution to the MFG system \eqref{eq:MFG} starting from $(0,m_0)$ with $m_0\in \Pk$ and assume that the solution is stable. Then there exists $\delta>0$ such that, if $\ds \sup_{t\in [0,T]} \dk(m(t),\mu^0(t))\leq \delta$, the sequence $(u^n,m^n)$ defined above converges to $(u,m)$ as $n\to+\infty$ in $C^{1,0}\times C^0$. 
\end{Theorem}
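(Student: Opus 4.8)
The plan is to upgrade the subsequential convergence of \cite{CH} to convergence of the \emph{whole} sequence by combining two ingredients: the potential structure supplies a Lyapunov functional that is almost monotone along the iteration \eqref{eq:Induc}, and stability supplies a non-degenerate local structure of the problem around $(u,m)$ into which the iterates can be trapped. The starting point is that every iterate satisfies $m^n(0)=m_0$, so by the precompactness in $C^{1,0}\times C^0$ and the identification of limit points established in \cite{CH}, every limit point of $(u^n,m^n)$ is a classical solution of \eqref{eq:MFG} issued from the \emph{same} initial measure $m_0$. Consequently it suffices to prove that $(u,m)$ is the unique limit point; Proposition \ref{prop:stable} already provides a radius $\eta>0$ such that $(u,m)$ is the only such solution in the ball $B_\eta$ around it, so the entire task reduces to confining all limit points to $B_\eta$.

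First I would recall and exploit the Lyapunov functional of \cite{CH}. Since $f=\frac{\delta F}{\delta m}$ and $g=\frac{\delta G}{\delta m}$, and since the best response $(u^{n+1},m^{n+1})$ minimizes the convex frozen problem against the belief $\mu^n$, the energy
$$\Phi^n:=\int_0^T F(\mu^n(t))\,dt+G(\mu^n(T))+\big(\text{best-response value against }\mu^n\big)$$
obeys an estimate of the form $\Phi^{n+1}\le \Phi^n-\tfrac{c}{n+1}\,e^n+O(1/n^2)$, where $e^n$ is an exploitability-type gap controlling $\sup_t \dk(m^{n+1}(t),\mu^n(t))$. Summation then shows that $\Phi^n$ converges and that $\sup_t \dk(m^{n+1}(t),\mu^n(t))\to 0$, re-deriving that limit points are equilibria. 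Moreover the frozen best-response problem is convex in $(m,w)$, hence has a unique minimizer depending continuously on the belief; as $\mu^{n+1}-\mu^n=O(1/n)$ by \eqref{rule}, this yields $\|m^{n+1}-m^n\|_{C^0}\to 0$, so the set of limit points is connected.

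Next I would convert stability into a usable local estimate. By definition stability means that the linearized system \eqref{eq.LS} (with $t_1=0$) has only the trivial solution, i.e. the second variation $\mathcal J(0,u,m;\cdot,\cdot)$ has trivial kernel on the admissible directions satisfying the continuity equation (this is the content linking \eqref{eq.LS} to $\mathcal J$ used in Lemma \ref{lem:equiv}). The quadratic form $\mathcal J$ is the sum of the coercive term $\int m^{-1}D^2_{qq}L\,(z+\mu D_pH)\cdot(z+\mu D_pH)$, whose positivity follows from the uniform convexity of $H$, and of the lower-order coupling terms in $\frac{\delta f}{\delta m},\frac{\delta g}{\delta m}$, which are weakly continuous along the constraint. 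A compactness argument (weak lower semicontinuity of the coercive part against weak convergence of the compact part) is meant to upgrade the non-degeneracy of $\mathcal J$ into a coercive, quadratic local well for the energy around $(m,w)$ among competitors issued from $m_0$, of the form $J(m',w')-J(m,w)\ge c\,d\big((m',w'),(m,w)\big)^2$. Together with Proposition \ref{prop:stable}, this pins $(u,m)$ as the only equilibrium from $m_0$ in $B_\eta$ and furnishes the quantitative estimate required below.

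Finally I would close by a trapping argument: taking $\mu^0$ with $\sup_t \dk(m(t),\mu^0(t))\le\delta$ makes $\Phi^0$ as close as desired to the energy of $(u,m)$, so that, by the Lyapunov decrease and the local well, the iterates $(u^n,m^n)$ remain in $B_\eta$ for all $n$; every limit point is then an equilibrium from $m_0$ lying in $B_\eta$, hence equals $(u,m)$ by the isolation of Proposition \ref{prop:stable}, and precompactness promotes this to convergence of the entire sequence in $C^{1,0}\times C^0$. \textbf{The main obstacle} is exactly this quantitative trapping step. One must (i) turn the infinitesimal non-degeneracy encoded by stability into a genuine \emph{uniform} coercive well valid on a fixed neighborhood, which is delicate because $F$ and $G$ are not assumed convex, so the sign of the second variation along the directions actually explored by the dynamics must be controlled carefully; and (ii) control the accumulated fictitious-play errors — the $O(1/n^2)$ increments and the Cesàro averaging in \eqref{rule} — uniformly against the depth of the well, so that a trajectory starting inside it never escapes. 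This interaction between the global learning dynamics and the purely local notion of stability is the genuinely new and most technical part of the argument.
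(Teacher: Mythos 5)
Your strategy contains a genuine gap, and in fact cannot prove the theorem as stated. The pivotal step of your plan is the conversion of stability into a uniform quadratic well $J(m',w')-J(m,w)\ge c\, d\left((m',w'),(m,w)\right)^2$ on a fixed neighborhood, and you yourself flag it as ``the main obstacle'' without resolving it; but this step is not merely technical, it is structurally unavailable here. First, the theorem assumes only that $(u,m)$ is a \emph{stable solution} of the MFG system issued from $(0,m_0)$ --- it is not assumed to arise from a minimizer of $J$. The link you invoke between stability (trivial kernel of the linearized system \eqref{eq.LS}) and non-degeneracy of the second variation $\mathcal J$ is Lemma \ref{lem:equiv}, whose proof uses the minimality of $(m,w)$ twice (to get $\mathcal J\ge 0$ and in the dynamic-programming step); for a stable solution that is not a minimizer, $\mathcal J$ may be indefinite and there is no energy well at all, yet the theorem still claims convergence to it. Second, even when $(u,m)$ does come from a minimizer, passing from strict positivity of the second variation at a point to uniform quadratic growth of $J$ in a neighborhood is exactly the two-norm difficulty of infinite-dimensional calculus of variations: positivity of $\mathcal J$ in the $L^2$-type norm in which it is coercive does not control the $C^0$-type neighborhoods in which the iterates live. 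Third, the Lyapunov bookkeeping does not close as described: the increments of the fictitious-play energy are $O(1/n^2)$ per step, so the \emph{total} accumulated error $\sum_n O(1/n^2)$ is a fixed constant, not something made small by taking $\delta$ small; to beat it you must start the trapping at a large time $N$ where the tail is small, and then you need $(u^{N+1},m^{N+1})$ close to $(u,m)$, which requires precisely the kind of finite-horizon propagation argument (choose $\mu^0$ close depending on $N$) that your sketch omits.

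For comparison, the paper's proof bypasses the energy landscape entirely and uses stability only at the linear level. By a compactness--contradiction argument (Lemma \ref{lem:boundLS}), stability yields an a priori estimate for the \emph{perturbed} linearized system \eqref{eq:PerturbedLS}: $\|v\|_{C^{1,0}}+\|\rho\|_{C^0}\le C\left(\|a\|_{C^0}+\|b\|_{C^0}+\|c\|_{C^0}\right)$. Then the fictitious-play system \eqref{eq:Induc} is rewritten as such a perturbation with $v=u^{n+1}-u$, $\rho=m^{n+1}-m$, where the errors $a,b,c$ are quadratic in $(v,\rho)$ plus a term of size $\|m^{n+1}-\mu^n\|_{C^0}$; absorbing the quadratic terms gives the local estimate $\|u^{n+1}-u\|_{C^{1,0}}+\|m^{n+1}-m\|_{C^0}\le C\|m^{n+1}-\mu^n\|_{C^0}$ whenever the left side is below a threshold $\eta$ (Lemma \ref{lem:CondIneq}). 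The potential structure enters only through the result of \cite{CH} that $\|m^{n+1}-\mu^n\|_{C^0}\to 0$, and the $O(1/n)$ increments coming from the averaging rule \eqref{rule} allow an induction that traps the iterates in the $\eta$-ball from some finite rank on (reached by choosing $\delta$ small), after which the displayed estimate forces convergence of the full sequence. If you want to salvage your approach, you would have to restrict the statement to stable solutions associated with minimizers of $J$ and supply the missing coercivity argument; to obtain the theorem as stated, the linear-estimate route is the one that works.
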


In other worlds, stable equilibria are local attractors for the learning procedure. Note also that, in contrast to \cite{CH}, we show here that the full sequence  $(u^n,m^n)$ converges, although the MFG system may have several solutions. \\

We now start the proof of Theorem \ref{theo:CvLearning}. Let us recall that, by the stability assumption of the MFG equilibrium $(u,m)$, the linearized system 
\be\label{LinSystBis}
\left\{\begin{array}{l}
-\partial_t v -\Delta v +D_pH(x,Du)\cdot Dv=\frac{\delta f}{\delta m}(x,m)(\rho) \; {\rm in}\; \T^d\times (0,T), \\
\partial_t \rho -\Delta \rho - \dive(\rho D_pH(x, Du))-\dive (mD^2_{pp}H(x,Du)Dv)= 0 \; {\rm in}\; \T^d\times (0,T) \\
\rho(x,0)=0, \; v(x,T)=\frac{\delta g}{\delta m}(x,m(T))(\rho(T))  \; {\rm in}\; \T^d.
\end{array}\right.
\ee
has the pair $(v,\rho)=(0,0)$ as unique solution. As a consequence, we have:
\begin{Lemma}\label{lem:boundLS} There is a constant $C>0$ such that, for any $a,b\in C^0(\T^d\times [0,T])$, $c\in C^0(\T^d)$ and $(v,\rho)$ solution to 
\be\label{eq:PerturbedLS}
\left\{\begin{array}{l}
-\partial_t v -\Delta v +D_pH(x,Du)\cdot Dv=\frac{\delta f}{\delta m}(x,m)(\rho) +a(x,t) \; {\rm in}\; \T^d\times (0,T), \\
\partial_t \rho -\Delta \rho - \dive(\rho D_pH(x, Du))-\dive (mD^2_{pp}H(x,Du)Dv)= \dive(b(x,t)) \; {\rm in}\; \T^d\times (0,T) \\
\rho(x,0)=0, \; v(x,T)=\frac{\delta g}{\delta m}(x,m(T))(\rho(T)) +c \; {\rm in}\; \T^d,
\end{array}\right.
\ee
one has
$$
\|v\|_{C^{1,0}}+ \|\rho\|_{C^0}\leq C\left(\|a\|_{C^0}+\|b\|_{C^0}+\|c\|_{C^0}\right).
$$
\end{Lemma}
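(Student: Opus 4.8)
The plan is to argue by contradiction with a compactness argument, in the very same spirit as the proof of Proposition~\ref{prop:stable}, converting the qualitative stability hypothesis (that the homogeneous system \eqref{LinSystBis} has only the trivial solution) into the quantitative a priori bound. Note that existence of $(v,\rho)$ is not at issue here: the lemma only asserts an estimate for any given solution, so I may take the solutions as provided by a putative failure of the bound.

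Suppose the estimate fails. Then for each $n$ there exist data $a^n,b^n\in C^0(\T^d\times[0,T])$, $c^n\in C^0(\T^d)$ and a solution $(v^n,\rho^n)$ of \eqref{eq:PerturbedLS} with $\|v^n\|_{C^{1,0}}+\|\rho^n\|_{C^0} > n\bigl(\|a^n\|_{C^0}+\|b^n\|_{C^0}+\|c^n\|_{C^0}\bigr)$. Since \eqref{eq:PerturbedLS} is linear in $(v,\rho)$ with data $(a,b,c)$, I normalize by dividing through by $\|v^n\|_{C^{1,0}}+\|\rho^n\|_{C^0}$, so that after renaming $\|v^n\|_{C^{1,0}}+\|\rho^n\|_{C^0}=1$ while $\|a^n\|_{C^0}+\|b^n\|_{C^0}+\|c^n\|_{C^0} < 1/n \to 0$.

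Next I would establish uniform Hölder bounds to gain compactness. Because $u$ and $m$ are fixed smooth functions, the coefficients $D_pH(x,Du)$ and $mD^2_{pp}H(x,Du)$ are bounded; using $\|\rho^n\|_{C^0}\le 1$ and the Lipschitz assumption on $\frac{\delta f}{\delta m}$, the source $\frac{\delta f}{\delta m}(x,m)(\rho^n)+a^n$ of the backward $v$-equation is bounded in $C^0$, and the terminal datum $\frac{\delta g}{\delta m}(x,m(T))(\rho^n(T))+c^n$ is bounded in a Hölder space by the regularity assumed on $g$. Parabolic Schauder/maximal regularity theory (Theorems 9.1 and 10.1 in \cite{LSU}, as already invoked in Proposition~\ref{prop:stable}) then yields a uniform bound on $(v^n)$ in $C^{1+\alpha,(1+\alpha)/2}$ for some $\alpha\in(0,1)$. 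For $\rho^n$, I rewrite the right-hand side of the forward equation in divergence form as $\dive\bigl(\rho^n D_pH(x,Du)+mD^2_{pp}H(x,Du)Dv^n+b^n\bigr)$, whose argument is bounded in $C^0$; De Giorgi--Nash--Moser estimates for divergence-form parabolic equations then bound $(\rho^n)$ uniformly in $C^{\alpha,\alpha/2}$. By Arzel\`a--Ascoli, up to a subsequence $v^n\to v$ in $C^{1,0}$ and $\rho^n\to\rho$ in $C^0$, whence $\|v\|_{C^{1,0}}+\|\rho\|_{C^0}=1$. Passing to the limit in \eqref{eq:PerturbedLS}, and using $a^n,b^n,c^n\to 0$ together with $\rho^n(T)\to\rho(T)$ in $C^0$ in the terminal condition, shows that $(v,\rho)$ is an a priori weak, hence by bootstrapping classical, solution of the homogeneous system \eqref{LinSystBis}. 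Stability of $(u,m)$ forces $(v,\rho)=(0,0)$, contradicting $\|v\|_{C^{1,0}}+\|\rho\|_{C^0}=1$.

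The main obstacle is the compactness step: one must produce uniform-in-$n$ Hölder estimates for the coupled forward-backward system, which requires handling the $\rho$-equation in divergence form (its source involves $Dv^n$, controlled only in $C^0$) so that the normalized sequence is precompact precisely in $C^{1,0}\times C^0$. Once that precompactness is in hand, the passage from the qualitative stability statement to the contradiction is routine.
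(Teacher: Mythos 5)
Your proposal is correct and follows essentially the same route as the paper's own proof: argue by contradiction, normalize the solutions (exploiting linearity of the system in $(v,\rho,a,b,c)$), obtain uniform H\"older bounds via parabolic regularity (the LSU estimates for the backward equation and divergence-form estimates for the forward one), extract a limit of norm one in $C^{1,0}\times C^0$, and identify it as a solution of the homogeneous linearized system \eqref{LinSystBis}, contradicting stability. The only cosmetic difference is the normalization convention (you rescale so the data tend to zero; the paper fixes the data at size one and lets the solution norm blow up before dividing), which changes nothing mathematically.
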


\begin{proof} We argue by contradiction. Since the system is affine, this means that  there exists $a^n, b^n,c^n$ and $(v^n, \rho^n)$ solution to \eqref{eq:PerturbedLS} with 
$$
\|a^n\|_{C^0}+\|b^n\|_{C^0}+\|c^n\|_{C^0}\leq 1 \qquad {\rm and}\qquad \theta^n:=\|v^n\|_{C^{1,0}}+ \|\rho^n\|_{C^0} \geq n.
$$
Let us set 
$$
\tilde v^n :=\frac{v^n}{\theta^n}, \qquad \tilde \rho^n := \frac{\rho^n}{\theta^n}. 
$$
The pair $(\tilde v^n , \tilde \rho^n)$ solves 
$$
\left\{\begin{array}{l}
-\partial_t \tilde v^n -\Delta \tilde v^n +D_pH(x,Du)\cdot D \tilde v^n=\frac{\delta f}{\delta m}(x,m)(\tilde\rho^n) +a^n(x,t)/\theta^n \; {\rm in}\; \T^d\times (0,T), \\
\partial_t \tilde\rho^n -\Delta \tilde\rho^n - \dive(\tilde\rho^n D_pH(x, Du))-\dive (mD^2_{pp}H(x,Du)D \tilde v^n)= \dive\left(b^n(x,t)/\theta^n\right) \; {\rm in}\; \T^d\times (0,T) \\
\tilde\rho^n(x,0)=0, \; \tilde v^n(x,T)=\frac{\delta g}{\delta m}(x,m(T))(\tilde \rho(T)) +c^n/\theta^n \; {\rm in}\; \T^d.
\end{array}\right.
$$
The $\tilde v_n$ satisfy a linear parabolic equation with bounded coefficients. Therefore  (see for instance, Theorem 9.1 and Theorem 10.1 in \cite{LSU}), the sequences  $(\tilde v_n)$ and $(D \tilde v_n)$ are bounded in $C^{\alpha,\alpha/2}$ for some $\alpha\in(0,1)$. In the same way, the $\tilde\rho^n$ satisfy a linear parabolic equation in divergence form with bounded coefficients, so that the sequence $(\tilde\rho^n)$ is bounded in $C^{\alpha,\alpha/2}$. Thus, up to a subsequence $(\tilde v_n)$ and $(D \tilde v_n)$ converge uniformly to some $v$ and $Dv$ while  $(\tilde\rho^n)$ converges to some $\rho$. Note that, by definition of $\theta^n$ and $(\tilde v_n,\tilde \rho_n)$, we have
$$
\|v\|_{C^{1,0}}+\|\rho\|_{C^0}=\lim_{n \rightarrow +\infty} (  \|\tilde v^n\|_{C^{1,0}}+\|\tilde \rho^n\|_{C^0}  )=1.
$$
On the other hand, the pair $(v,\rho)$ is a weak solution to the linearized system \eqref{LinSystBis}, so that, by our assumption on this system, $(v,\rho)$ must actually vanish. So there is a contradiction. 
\end{proof}
 
\begin{Lemma}\label{lem:CondIneq} There exists $\eta>0$ and $C>0$ such that, if 
$$
\|D(u^{n+1}-u)\|_{C^0}+\|m^{n+1}-m\|_{C^0}\leq \eta,
$$
then 
$$
\|u^{n+1}-u\|_{C^{1,0}}+\|m^{n+1}-m\|_{C^0}\leq C \|m^{n+1}-\mu^n\|_{C^0}.
$$
\end{Lemma}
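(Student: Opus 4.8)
The plan is to set $v:=u^{n+1}-u$ and $\rho:=m^{n+1}-m$ and to show that $(v,\rho)$ solves a perturbed linearized system of the form \eqref{eq:PerturbedLS}, whose source terms are controlled by $\|m^{n+1}-\mu^n\|_{C^0}$ together with quadratic remainders; I would then invoke Lemma \ref{lem:boundLS} and absorb the quadratic part using the smallness hypothesis. The structural observation that makes everything work is the splitting of the coupling discrepancy into an ``external'' part proportional to $\|m^{n+1}-\mu^n\|_{C^0}$ and a part that linearizes against $(u,m)$.

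First I would subtract the MFG system \eqref{eq:MFG} for $(u,m)$ from the scheme \eqref{eq:Induc} for $(u^{n+1},m^{n+1})$. In the Hamilton--Jacobi equation I write $H(x,Du^{n+1})-H(x,Du)=D_pH(x,Du)\cdot Dv+R_1$ with $|R_1|\le C|Dv|^2$ (using $H\in C^2$), and I split the coupling as
$$f(x,\mu^n)-f(x,m)=\big[f(x,\mu^n)-f(x,m^{n+1})\big]+\big[f(x,m^{n+1})-f(x,m)\big],$$
where the second bracket equals $\tfrac{\delta f}{\delta m}(x,m)(\rho)+R_2$ with $|R_2|\le C\|\rho\|_{C^0}^2$ by the Lipschitz continuity of $\tfrac{\delta f}{\delta m}$ in the measure argument, while the first bracket is bounded by $C\|m^{n+1}-\mu^n\|_{C^0}$ since $f$ is Lipschitz. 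This yields the first equation of \eqref{eq:PerturbedLS} with $a:=\big[f(x,\mu^n)-f(x,m^{n+1})\big]+R_2-R_1$.

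Next I treat the Kolmogorov equation analogously: expanding $m^{n+1}D_pH(x,Du^{n+1})-mD_pH(x,Du)$ around $(u,m)$ produces the linear terms $\rho\,D_pH(x,Du)+m\,D^2_{pp}H(x,Du)Dv$ plus a remainder $b$ with $|b|\le C\big(|Dv|^2+\|\rho\|_{C^0}|Dv|\big)$, giving the divergence source $\dive(b)$. The terminal condition yields $v(\cdot,T)=\tfrac{\delta g}{\delta m}(x,m(T))(\rho(T))+c$ with $|c|\le C\|m^{n+1}-\mu^n\|_{C^0}+C\|\rho\|_{C^0}^2$, and $\rho(\cdot,0)=0$ since both trajectories start from $m_0$. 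Writing $X:=\|v\|_{C^{1,0}}+\|\rho\|_{C^0}$, Lemma \ref{lem:boundLS} then gives
$$X\le C\big(\|a\|_{C^0}+\|b\|_{C^0}+\|c\|_{C^0}\big)\le C\|m^{n+1}-\mu^n\|_{C^0}+C\big(\|Dv\|_{C^0}^2+\|\rho\|_{C^0}^2+\|\rho\|_{C^0}\|Dv\|_{C^0}\big).$$

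Finally I absorb the quadratic terms. The key point is that the hypothesis bounds precisely $\|Dv\|_{C^0}+\|\rho\|_{C^0}\le\eta$ (not the full $C^{1,0}$ norm of $v$), but every quadratic term above involves only $\|Dv\|_{C^0}$ and $\|\rho\|_{C^0}$, so each is dominated by $\eta\big(\|Dv\|_{C^0}+\|\rho\|_{C^0}\big)\le\eta X$. Choosing $\eta$ small enough that $C\eta\le 1/2$ lets me move these terms to the left and conclude $X\le 2C\|m^{n+1}-\mu^n\|_{C^0}$, which is the claim. I expect the main obstacle to be the careful bookkeeping of the three remainders $a$, $b$, $c$ and verifying that the quadratic error depends only on $\|Dv\|_{C^0}$ and $\|\rho\|_{C^0}$, so that it is genuinely absorbable under the stated smallness assumption; the rest is a routine consequence of Lemma \ref{lem:boundLS}.
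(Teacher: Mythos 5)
Your proof is correct and follows essentially the same route as the paper: set $v=u^{n+1}-u$, $\rho=m^{n+1}-m$, recast the pair as a solution of the perturbed linearized system \eqref{eq:PerturbedLS}, apply Lemma \ref{lem:boundLS}, and absorb the quadratic remainders into the left-hand side using the smallness hypothesis (which, as you correctly note, controls exactly the quantities $\|Dv\|_{C^0}$ and $\|\rho\|_{C^0}$ appearing in those remainders). The only difference is cosmetic bookkeeping: the paper keeps the coupling error $f(x,\mu^n)-f(x,m)-\frac{\delta f}{\delta m}(m)(m^{n+1}-m)$ intact in $a$ and bounds it via $\|\mu^n-m\|_{C^0}^2+\|m^{n+1}-\mu^n\|_{C^0}$, whereas you first replace $\mu^n$ by $m^{n+1}$ through the Lipschitz bound and then Taylor-expand, which yields the same estimate.
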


\begin{proof} We rewrite system \eqref{eq:Induc} as \eqref{eq:PerturbedLS} with $v=u^{n+1}-u$, $\rho=m^{n+1}-m$,
$$
\begin{array}{rl}
\ds a \; := & \ds H(x,Du^{n+1})-H(x,Du)- D_pH(x, Du) \cdot D(u^{n+1}-u) \\
& \ds + f(x,\mu^{n}(t))-f(x,m(t))- \frac{\delta f}{\delta m}(m(t))(m^{n+1}(t)-m(t)) ,
\end{array}
$$
$$
\begin{array}{rl}
\ds b\; := & \ds  m^{n+1}D_pH(x,Du^{n+1})-mD_pH(x,Du)- (m^{n+1}-m)D_pH(x,Du)\\
& \ds \qquad -mD^2_{pp}H(x,Du)D(u^{n+1}-u)
\end{array}
$$
and
$$
c(x):=  g(x,\mu^{n}(T))-g(x,m(T))- \frac{\delta g}{\delta m}(m(t))(m^{n+1}(T)-m(T)).
$$
Then, by  Lemma \ref{lem:boundLS}, we get 
$$
\begin{array}{l}
\ds \|u^{n+1}-u\|_{C^{1,0}}+\|m^{n+1}-m\|_{C^0}\\
\qquad  \leq  \ds C (\|a\|_{C^0}+\|b\|_{C^0}+\|c\|_{C^0}) \\
\qquad  \leq  \ds C\left(\|D(u^{n+1}-u)\|_{C^0}^2+\|\mu^{n}-m\|_{C^0}^2+\|m^{n+1}-\mu^n\|_{C^0}+\|m^{n+1}-m\|^2_{C^0}\right) \\
\qquad  \leq  \ds C\left(\|D(u^{n+1}-u)\|_{C^0}^2+\|m^{n+1}-m\|^2_{C^0}+\|m^{n+1}-\mu^n\|_{C^0}\right)
\end{array}
$$
where the constant $C$ in the last inequality depends only on the data. Thus, if $\eta>0$ is small enough and  $\|D(u^{n+1}-u)\|_{C^0}+\|m^{n+1}-m\|_{C^0}$ is not larger than $\eta$, one can absorb the square terms of the right-hand side into the left-hand side to get 
the conclusion.
\end{proof}

\begin{proof}[Proof of Theorem \ref{theo:CvLearning}] Following \cite{CH}, the difference $(m^{n+1}-\mu^n)$ converges to $0$ in $C^0$: so we can find $N\geq 0$ large enough so that 
\be\label{eq:keystepCH}
\|m^{n+1}-\mu^n\|_{C^0}\leq \eta/(2C)\qquad \forall n\geq N, 
\ee
where $\eta$ and $C$ are given by Lemma \ref{lem:CondIneq}. 

Next we note that the $u^n$ are uniformly Lipschitz continuous. So the $m^n$ solve a parabolic equation in divergence form with bounded coefficient and therefore are bounded in $C^{\alpha, \alpha/2}$. Thus so are the $\mu^n$. Moreover, by definition of $\mu^n$, we have that
$$
\|\mu^{n+1}-\mu^n\|_{C^0}= \frac{1}{n}\|m^{n+1}-\mu^n\|_{C^0}\leq \frac{C}{n}.
$$
As $u^{n+2}$ and $u^{n+1}$ solve the same equation with right-hand side given respectively by $f(x,\mu^n)$ and $f(x,\mu^{n+1})$ and terminal condition given by $g(x,\mu^n(T))$ and $g(x,\mu^{n+1}(T))$ respectively, we conclude that 
\be\label{eq:un+2-un+1}
\|u^{n+2}-u^{n+1}\|_{C^{1,0}}\leq C \left(\|f(\cdot,\mu^n)-f(\cdot,\mu^{n+1})\|_{C^{1,0}}+\|g(\cdot,\mu^n(T))-g(\cdot,\mu^{n+1}(T))\|_{C^{1,0}}\right) \leq \frac{C}{n}.
\ee
Plugging this estimate into the equation satisfied by $m^{n+1}-m^{n+2}$ also gives
\be\label{eq:mn+2-mn+1}
\|m^{n+1}-m^{n+2}\|_{C^0}\leq  \frac{C}{n}.
\ee
From now on we assume that $N$ is so large that the right-hand sides $C/n$ in \eqref{eq:un+2-un+1} and \eqref{eq:mn+2-mn+1} are less than $\eta/4$ for any $n\geq N$. 

Next we note that, if $\mu^0$ is sufficiently close to $m$, then $u^1$ is close to $u$ (in $C^{1,0}$) and $m^1$ is close to $m$ (in $C^0$). This in turn shows that $\mu^1$ is close to $m$ (in $C^0$). By induction, we obtain that, for any fixed $n$, one can choose $\mu^0$ sufficiently close to $m$ so that 
$u^{n+1}$ is close to $u$ (in $C^{1,0}$) and $m^{n+1}$ is close to $m$ (in $C^0$). Applying this argument to the integer $N$ defined above, we obtain that one can choose $\mu^0$ so close to $m$ that 
$$
\|u^{N+1}-u\|_{C^{1,0}}+\|m^{N+1}-m\|_{C^0}\leq \eta.
$$
We claim that this inequality propagate to any $n+1\geq N+1$. Indeed, let us assume that it holds for some $n+1$. Then, by Lemma \ref{lem:CondIneq}, we have 
$$
\|u^{n+1}-u\|_{C^{1,0}}+\|m^{n+1}-m\|_{C^0}\leq C\|m^{n+1}-\mu^n\|_{C^0}\leq \eta/2 
$$
thanks to \eqref{eq:keystepCH}. So, by \eqref{eq:un+2-un+1} and \eqref{eq:mn+2-mn+1} and the choice of $N$, we also have 
$$
\begin{array}{l}
\ds \|u^{n+2}-u\|_{C^{1,0}}+\|m^{n+2}-m\|_{C^0}\\
\ds \qquad \leq \|u^{n+1}-u\|_{C^{1,0}}+\|m^{n+1}-m\|_{C^0}+  \|u^{n+2}-u^{n+1}\|_{C^{1,0}}+\|m^{n+1}-m^{n+2}\|_{C^0} \leq \eta,
\end{array}
$$
which proves the claim. 

In particular, we can apply Lemma \ref{lem:CondIneq} to any $n\geq N$ to get
$$
\|u^{n+1}-u\|_{C^{1,0}}+\|m^{n+1}-m\|_{C^0}\leq C\|m^{n+1}-\mu^n\|_{C^0},
$$
where the right-hand side tends to $0$ as $n\to +\infty$ by Theorem 2.1  in \cite{CH}. This proves the convergence of $(u^n)$ and $(m^n)$ to $u$ and $m$ respectively. 
\end{proof}

\section{Appendix}

Let $(u,\mu)=((u_i)_{i=1, \dots, d},\mu)$ be a classical solutions to the forward-backward system 
$$
\left\{\begin{array}{l}
-\partial_t u_i -\Delta u_i +g_i(x,t)=0 \; {\rm in}\; \T^d\times (0,T), \; i=1, \dots, d,\\
\partial_t \mu -\Delta \mu + \dive(h(x,t))= 0 \; {\rm in}\; \T^d\times (0,T)
\end{array}\right.
$$
We suppose that
$$
 u_i(x,0)=\mu(x,0)=0 \; {\rm in}\; \T^d, \; i=1, \dots, d, 
 $$
$$
\sum_{i=1}^d \left| g_i(x,t)\right|^2 \leq C_0 \left( \|\mu(\cdot,t)\|^2_2+|u(x,t)|^2+|Du(x,t)|^2\right),
$$
$$
|h(x,t)|^2\leq C_0\left( |\mu(x,t)|^2+|u(x,t)|^2\right), 
$$
$$
|\dive (h)(x,t)|^2 \leq C_0  \left( |\mu(x,t)|^2+|D\mu(x,t)|^2+|u(x,t)|^2+|Du(x,t)|^2\right).
$$

\begin{Theorem}\label{theo:continuation} Under the above assumptions, one has $u(x,t)=0$ and $\mu(x,t)=0$ on $\T^d\times [0,T]$. 
\end{Theorem}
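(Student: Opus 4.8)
The statement is a backward-uniqueness (unique continuation) result for a coupled forward--backward parabolic system: the first line is a backward heat equation for $u$ and the second a forward Kolmogorov equation for $\mu$, both carrying homogeneous Cauchy data at $t=0$. Since forward propagation of zero data is ill posed for the $u$-block, no Gronwall or energy estimate can succeed directly, and I would instead follow the logarithmic convexity method of Lions--Malgrange, in the system version of Cannarsa and Tessitore \cite{CT} (see also \cite{CannarsaSinestrari}). The plan is to introduce the combined quantity $N(t):=\inte\big(|u(x,t)|^2+|\mu(x,t)|^2\big)dx$ and to prove a second order differential inequality forcing $\log N$ to be convex up to a quadratic correction; since $N(0)=0$, this will force $N\equiv 0$.

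The heart of the argument is the derivation of that differential inequality. Differentiating $N$ and integrating by parts gives $N'(t)=2D(t)+R_1(t)$, where $D(t):=\|Du(t)\|_2^2-\|D\mu(t)\|_2^2$ and $R_1$ collects the source contributions $\langle g,u\rangle$ and $\langle h,D\mu\rangle$. The decisive point is the forward--backward sign structure: substituting $\partial_t u=-\Delta u+g$ and $\partial_t\mu=\Delta\mu-\dive(h)$ from the two equations, one finds $D'(t)=2\|\Delta u\|_2^2+2\|\Delta\mu\|_2^2+R_2(t)$, so that both leading contributions enter with a positive sign and produce a coercive term. I would then use the three pointwise bounds on $g,h,\dive(h)$ together with Young's inequality and the interpolation $\|Dv\|_2^2\le\|v\|_2\|\Delta v\|_2$ to absorb every error term in $R_1$ and $R_2$ into a small multiple of $\|\Delta u\|_2^2+\|\Delta\mu\|_2^2$ plus a constant times $N$. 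Combining these estimates and applying Cauchy--Schwarz in the form $D^2\le(\|\Delta u\|_2^2+\|\Delta\mu\|_2^2)\,N$ yields the log-convexity inequality $N N''-(N')^2\ge -C\big(N^2+N|N'|\big)$.

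Given this inequality, the conclusion is standard: on any interval where $N>0$ the function $t\mapsto\log N(t)$ is convex up to a controlled correction, so $\log N$ cannot tend to $-\infty$ at a finite endpoint while remaining on such an interval. Setting $t^\ast:=\sup\{\tau:\ N\equiv 0 \text{ on } [0,\tau]\}$ and arguing on a connected component of the open set $\{N>0\}$ whose left endpoint $a\ge t^\ast$ satisfies $N(a)=0$ (by continuity, since $N(t^\ast)=0$), one reaches a contradiction with convexity; hence $N\equiv 0$ on $[0,T]$, i.e. $u\equiv 0$ and $\mu\equiv 0$.

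The main obstacle is precisely the sign bookkeeping in the second step: one must verify that the forward--backward coupling genuinely delivers $+\,2\|\Delta u\|_2^2+2\|\Delta\mu\|_2^2$ rather than a sign-indefinite quantity, and that the cross-couplings allowed by the hypotheses --- the dependence of $g$ on $\|\mu\|_2$ and of $\dive(h)$ on both $Du$ and $D\mu$ --- can all be absorbed into this single coercive term. A secondary technical care is to arrange the computation so that only the equations and spatial integration by parts are used, thereby avoiding any need for time-regularity of the source terms $g$, $h$ and $\dive(h)$.
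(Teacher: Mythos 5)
Your strategy---log-convexity in the style of Agmon--Nirenberg, in place of the paper's weighted (Carleman-type) energy argument taken from Lions--Malgrange and Cannarsa--Tessitore---is a genuinely different route, and two of your structural observations are correct: the forward--backward coupling does give $\frac{d}{dt}\big(\|Du\|_2^2-\|D\mu\|_2^2\big)=2\|\Delta u\|_2^2+2\|\Delta \mu\|_2^2+R_2$ with both Laplacian terms positive, and Cauchy--Schwarz does give $D^2\le N\Sigma$, where $D:=\|Du\|_2^2-\|D\mu\|_2^2$ and $\Sigma:=\|\Delta u\|_2^2+\|\Delta\mu\|_2^2$. But the absorption step, which you present as routine, is where the proof breaks. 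All the error terms produced by the hypotheses on $g$, $h$, $\dive(h)$ involve the \emph{sum} $E:=\|Du\|_2^2+\|D\mu\|_2^2$, whereas the quantity whose derivative has the good sign is the \emph{difference} $D$; the only available bound is the interpolation $E\le\sqrt{N\Sigma}$, so after Young's inequality every error is controlled by $\epsilon N\Sigma+C_\epsilon N^2$. The coercive term in $NN''-(N')^2$, however, is not $4N\Sigma$ but $4(N\Sigma-D^2)$, so absorbing the $\epsilon N\Sigma$ costs you an unabsorbed $-\epsilon D^2\simeq-\tfrac{\epsilon}{4}(N')^2$. What actually comes out is $NN''-(N')^2\ge-\epsilon (N')^2-C_\epsilon N^2$, i.e. $(\log N)''\ge-\epsilon\big((\log N)'\big)^2-C_\epsilon$, with the quadratic term carrying the \emph{bad} sign --- not the inequality you claim. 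This weaker family of inequalities does not force $N\equiv0$: for each $\epsilon$ it is satisfied by $N(t)\sim(t-a)^{1/\epsilon}$, so it yields only vanishing of infinite order at $t=a$, not unique continuation. For a single backward heat equation one has $E=D$, the errors are controlled by the frequency $D/N$ itself, and log-convexity closes; it is precisely the forward--backward coupling that destroys this, and it is why the paper multiplies by the weight $e^{\kappa(t-T)^2/2}\theta(t)$ and sends $\kappa\to\infty$: the free parameter $\kappa$ supplies the quantitative strength that plain log-convexity lacks here.

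There is a second, independent gap: under the stated hypotheses $N''$ need not exist. Computing it requires differentiating $R_1=2\langle u,g\rangle+2\langle D\mu,h\rangle$ in time, and the theorem assumes only pointwise bounds on $g$, $h$, $\dive(h)$ --- no time regularity (nor does the MFG application supply any). You defer this as ``secondary technical care,'' but the standard repair is to abandon $N''$ and derive a first-order differential inequality for the Dirichlet quotient $\Lambda=D/N$, whose derivative involves only $D'$ and $N'$, both computable from the equations; that reformulation restructures the argument and then collides with exactly the sum-versus-difference obstruction above (one gets $\Lambda'\ge-C(1+E/N)$, and $E/N$ is not controlled by $|\Lambda|$, again because of cancellation in $D$). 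So the proof as proposed does not close, and the paper's weighted estimates for the two equations simultaneously, with absorption for $T$ small and $\kappa$ large, are doing essential work rather than being a presentational choice.
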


 The argument is standard and goes back to J.L. Lions and B. Malgrange in  \cite{LM} (see also Cannarsa-Tessitore \cite{CT} for a forward-backward system). 

\begin{proof} It is enough to argue for $T>0$ sufficiently small and prove that $\mu=0$ and $u=0$ on $[0,T/2]$. Let $\theta:[0,T]\to [0,1]$ be a smooth, non increasing function with $\theta(t)=1$ in $[0,T/2]$, $\theta (t)=0$ in $[2T/3, T]$ and $\|\theta'(t)\|_\infty\leq C/T$. For $\kappa\geq 1$ we set 
$$
\tilde u_i(x,t)= e^{\kappa(t-T)^2/2}\theta(t) u_i(x,t), \qquad \tilde \mu(x,t)= e^{\kappa(t-T)^2/2}\theta(t) \mu(x,t).
$$
Then $(\tilde u, \tilde \mu)= ((\tilde u_i), \tilde \mu)$ satisfies 
\be\label{ljhbzcsd}
\left\{\begin{array}{ll}
(i) & -\partial_t \tilde u_i -\Delta \tilde u_i +\kappa (t-T)\tilde u_i +e^{\kappa(t-T)^2/2}\theta' u_i +e^{\kappa(t-T)^2/2}\theta g_i=0, \\
(ii) & \partial_t \tilde \mu -\Delta  \tilde \mu - \kappa (t-T)\tilde \mu- e^{\kappa(t-T)^2/2}\theta' \mu + e^{\kappa(t-T)^2/2}\theta\dive( h)= 0,  \\
(iii) & \tilde u_i(x,0)=\tilde \mu(x,0)=\tilde u_i(x,T)=\tilde \mu(x,T)=0.
\end{array}\right.
\ee

Multiplying \eqref{ljhbzcsd}-(ii) by $\partial_t \tilde \mu$ and integrating in time-space, we obtain, after integration by parts the second term: 
$$
\int_0^T \inte (\partial_t \tilde \mu)^2 +\frac12 \partial_t |D\tilde \mu|^2 - \frac{\kappa}{2} (t-T)\partial_t(\tilde \mu)^2  - \partial_t \tilde \mu\left(e^{\kappa(t-T)^2/2}\theta' \mu -e^{\kappa(t-T)^2/2}\theta\dive( h)\right)= 0.
$$
We integrate the second term in time and integrate  by parts in time the third one: taking into account the fact that $\tilde \mu(\cdot,0)=\tilde \mu(\cdot,T)=0$, we get: 
$$
\int_0^T \inte (\partial_t \tilde \mu)^2  + \frac{\kappa}{2} (\tilde \mu)^2  - \partial_t \tilde \mu\left(e^{\kappa(t-T)^2/2}\theta' \mu -e^{\kappa(t-T)^2/2}\theta\dive( h)\right)= 0.
$$
By Young's inequality we obtain 
\be\label{deuxdeux}
\int_0^T \inte \frac12(\partial_t \tilde \mu)^2  + \frac{\kappa}{2} (\tilde \mu)^2  \leq C\int_0^T \inte
\left(e^{\kappa(t-T)^2}(\theta')^2 \mu^2 +e^{\kappa(t-T)^2}\theta^2|\dive( h)|^2\right).
\ee
We argue in the same way for $u_i$: we multiply \eqref{ljhbzcsd}-(i) by $\partial_t \tilde u_i$ and integrate in space-time: 
$$
\int_0^T\inte -(\partial_t \tilde u_i)^2 +\frac12 \partial_t |D \tilde u_i|^2  +\frac{\kappa}{2} (t-T)\partial_t (\tilde u_i)^2 +\partial_t \tilde u_i\left( e^{\kappa(t-T)^2/2}\theta' u_i +e^{\kappa(t-T)^2/2}\theta g_i\right)=0.
$$
This yields to
\be\label{troistrois}
\int_0^T\inte \frac12 (\partial_t \tilde u_i)^2 +\frac{\kappa}{2} (\tilde u_i)^2  \leq 
C \int_0^T\inte
\left( e^{\kappa(t-T)^2}(\theta')^2 u_i^2 +e^{\kappa(t-T)^2}\theta^2 g_i^2\right).
\ee

Next we multiply \eqref{ljhbzcsd}-(ii) by $\tilde \mu$ and integrate in time-space: using as usual that $\tilde \mu(\cdot,0)=\tilde \mu(\cdot,T)=0$, we obtain
$$
\int_0^T\inte |D\tilde \mu|^2 - \kappa (t-T)(\tilde \mu)^2 +\tilde \mu( - e^{\kappa(t-T)^2/2}\theta' \mu) - D\tilde \mu\cdot (e^{\kappa(t-T)^2/2}\theta h)= 0.
$$
Hence, by Young's inequality,  
\be\label{quatquat}
\int_0^T\inte \frac12 |D\tilde \mu|^2 \leq  \int_0^T\inte  \tilde \mu^2 + C \left(e^{\kappa(t-T)^2}(\theta')^2 \mu^2+ e^{\kappa(t-T)^2}\theta^2 h^2\right).
\ee
In the same way, we multiply \eqref{ljhbzcsd}-(i) by $u_i$ and integrate in time-space: 
$$
\int_0^T\inte |D \tilde u_i|^2 +\kappa (t-T)(\tilde u_i)^2 +\tilde u_i\left( e^{\kappa(t-T)^2/2}\theta' u_i +e^{\kappa(t-T)^2/2}\theta g_i\right)=0.
$$
Then, for $\ep\in (0,1)$ to be chosen later,
\be\label{cinqcinq}
\int_0^T\inte |D \tilde u_i|^2 \leq  \int_0^T\inte (\kappa T+\ep^{-1}) (\tilde u_i)^2 +C\ep\left( e^{\kappa(t-T)^2}(\theta')^2 u_i^2 +e^{\kappa(t-T)^2}\theta^2 g_i^2\right).
\ee

We now take into account the assumptions on the $g_i$, $h$ and $\dive(h)$:   \eqref{deuxdeux} becomes
$$
\begin{array}{l}
\ds \frac{\kappa}{2}  \int_0^T  \|\tilde \mu(t)\|^2_2 \\
\ds \qquad  \leq C\int_0^T 
\left(e^{\kappa(t-T)^2}(\theta')^2 \|\mu(t)\|^2_2 + e^{\kappa(t-T)^2}\theta^2\left( \|u(t)\|^2_2+\|Du(t)\|^2_2+ \|\mu(t)\|^2_2+ \|D\mu(t)\|^2_2 \right)\right) .
\end{array}
$$
Rearranging we obtain 
\be\label{deuxdeuxbis}
\ds \kappa  \int_0^T  \|\tilde \mu(t)\|^2_2  \leq C\int_0^T 
\left(e^{\kappa(t-T)^2}(\theta')^2 \|\mu(t)\|^2_2 + \|\tilde u(t)\|^2_{H^1}+ \|\tilde \mu(t)\|^2_{H^1} \right) .
\ee
 Note that we used the notation $\displaystyle   \|v(t)\|^2_{H^1}:=\|v(t)\|^2_2+ \|Dv(t)\|^2_2 $. 
The same argument for \eqref{troistrois} yields to
\be\label{troistroisbis}
\kappa \int_0^T  \|\tilde u_i(t)\|^2_2  \leq 
C \int_0^T  e^{\kappa(t-T)^2}(\theta')^2 \|u_i(t)\|_2^2 + \|\tilde u(t)\|^2_{H^1}+ \|\tilde \mu(t)\|^2_{H^1},
\ee
while \eqref{quatquat} and \eqref{cinqcinq} become respectively: 
\be\label{sixsixbis}
\int_0^T \|D\tilde \mu(t)\|_2^2 \leq  C \int_0^T  e^{\kappa(t-T)^2}(\theta')^2 \|\mu(t)\|_2^2+ \|\tilde u(t)\|^2_2+\|\tilde \mu(t)\|_2^2,
\ee
and 
\be\label{septsept}
\int_0^T \|D \tilde u_i(t)\|_2^2 \leq  \int_0^T  (\kappa T+\ep^{-1}) \|\tilde u_i(t)\|_2^2 +C\ep\left( e^{\kappa(t-T)^2}(\theta')^2 \|u_i(t)\|_2^2 
+ \|\tilde u(t)\|^2_{H^1}+ \|\tilde \mu(t)\|^2_{H^1}  \right).
\ee
Summing \eqref{septsept} over $i$ yields, for $\ep$ small enough (depending only on the constant $C$ in \eqref{septsept}), 
$$
\int_0^T \|D \tilde u(t)\|_2^2 \leq  C \int_0^T  (\kappa T+1) \|\tilde u(t)\|_2^2 + e^{\kappa(t-T)^2}(\theta')^2 \|u(t)\|_2^2 + \|\tilde \mu(t)\|^2_{H^1}.
$$
Plugging \eqref{sixsixbis} into the above inequality gives: 
\be\label{septseptbis}
\int_0^T \|D \tilde u(t)\|_2^2 \leq  C \int_0^T  (\kappa T+1) \|\tilde u(t)\|_2^2+ \|\tilde \mu(t)\|_2^2+ e^{\kappa(t-T)^2}(\theta')^2 (\|u(t)\|_2^2 + \|\mu(t)\|^2_{2}).
\ee

Collecting \eqref{deuxdeuxbis}, \eqref{troistroisbis}  (summing on $i=1,\dots, d$), \eqref{sixsixbis} and \eqref{septseptbis}  yields to 
$$
\begin{array}{rl}
\ds \kappa \int_0^T \|\tilde \mu(t)\|^2_2+ \|\tilde u(t)\|^2_2 \; \leq  &
\ds C\int_0^T 
e^{\kappa(t-T)^2}(\theta')^2 (\|\mu(t)\|^2_2+ \|u(t)\|_2^2) + \|\tilde u(t)\|^2_{H^1}+ \|\tilde \mu(t)\|^2_{H^1} \\
\leq & \ds C\int_0^T 
e^{\kappa(t-T)^2}(\theta')^2 (\|\mu(t)\|^2_2+ \|u(t)\|_2^2) + (\kappa T+1)\|\tilde u(t)\|^2_2+ \|\tilde \mu(t)\|^2_2
\end{array}
$$
We can now fix $T>0$ small enough, so that, for any $\kappa$ large enough, 
$$
\frac{\kappa}{2} \int_0^T \|\tilde \mu(t)\|^2_2+ \|\tilde u(t)\|^2_2
\leq 
C\int_0^T 
e^{\kappa(t-T)^2}(\theta')^2 (\|\mu(t)\|^2_2+ \|u(t)\|_2^2).
$$
By the choice of $\theta$ (namely $\theta=1$ on $[0,T/2]$ and $\|\theta'\|_\infty\leq C/T$), this implies that 
$$
\frac{\kappa}{2} \int_0^{T/2} e^{\kappa(t-T)^2} \left(\|\mu(t)\|^2_2+ \| u(t)\|^2_2\right)
\leq 
\frac{C}{T}\int_{T/2}^T 
e^{\kappa(t-T)^2} (\|\mu(t)\|^2_2+ \|u(t)\|_2^2).
$$
Hence
$$
\frac{\kappa}{2} e^{\kappa(T/2)^2}  \int_0^{T/2}  \left(\|\mu(t)\|^2_2+ \| u(t)\|^2_2\right)
\leq 
\frac{C}{T}e^{\kappa(T/2)^2} \int_{T/2}^T  (\|\mu(t)\|^2_2+ \|u(t)\|_2^2).
$$
Dividing by  $e^{\kappa(T/2)^2} $ and letting $\kappa\to +\infty$ yields to $\mu=0$ and $u=0$ on $[0,T/2]$.

\end{proof}

\thebibliography{99}
\bibitem{AGS}  Ambrosio, L., Gigli, N., Savar\`e, G. Gradient flows in metric spaces and in the space of probability measures. Lectures in Mathematics ETH Z\"urich. Birkh\"auser Verlag, Basel, 2008.

\bibitem{BaFi} Bardi M., Fischer M. In preparation. 

\bibitem{BrownFictitiousPlay}
Brown, G. W. (1951). {\it Iterative solution of games by Fictitious Play.} Activity analysis of production and allocation, 13(1), 374-376.

\bibitem{CannarsaSinestrari}
Cannarsa, P., Sinestrari, C.
Semiconcave functions, Hamilton-Jacobi equations and optimal control.
 Birkh\"auser, Boston, 2004.

\bibitem{CT} P. Cannarsa, M. E. Tessitore {\it  Optimality conditions for boundary control problems of parabolic type}. Control and estimation of distributed parameter systems: nonlinear phenomena (Vorau, 1993), 79-96, Internat. Ser. Numer. Math., 118, Birkh\"auser, Basel, 1994.

\bibitem{CGPT} Cardaliaguet P, Graber J., Porretta A., Tonon D., {\it Second order mean field games with degenerate diffusion and local coupling.} Nonlinear Differ. Equ. Appl. 22, 5, (2015), 1287-1317.

\bibitem{CH} Cardaliaguet P., Hadikhanloo S. {\it  Learning in Mean Field Games: the Fictitious Play.} To appear in COCV. Pre-print hal-01179503.  

\bibitem{CDLL} Cardaliaguet P., Delarue F., Lasry J.M., Lions P.L. {\it The master equation and the convergence problem in mean field games.} Pre-print arXiv:1509.02505

\bibitem{HCMieeeAC06} Huang, M., Malham\'e, R.P., Caines, P.E.  (2006). 
{\it Large population stochastic dynamic games: closed-loop McKean-Vlasov systems and the Nash certainty equivalence
principle.} Communication in information and systems. Vol. 6, No. 3, pp. 221-252.

\bibitem{LSU}
Lady\v{z}enskaja O.A., Solonnikov V.A and Ural'ceva N.N 
 Linear and quasilinear equations of parabolic type.
 Translations of Mathematical Monographs, Vol. 23 American Mathematical Society, Providence, R.I. 1967

\bibitem{LL06cr1} Lasry, J.-M., Lions, P.-L. {\it Jeux \`a champ moyen. I. Le cas stationnaire.}
C. R. Math. Acad. Sci. Paris  343  (2006),  no. 9, 619-625.

\bibitem{LL06cr2} Lasry, J.-M., Lions, P.-L. {\it Jeux \`a champ moyen. II. Horizon fini et contr\^ole optimal.}
C. R. Math. Acad. Sci. Paris  343  (2006),  no. 10, 679-684.

\bibitem{LL07mf} Lasry, J.-M., Lions, P.-L. {\it Mean field games.}  Jpn. J. Math.  2  (2007),  no. 1, 229--260.

\bibitem{LM} Lions J. L., Malgrange B., {\it Sur l'unicit\'e r\'etrograde dans les probl\`emes mixtes paraboliques.} (French) Math. Scand. 8 1960 277-286.


\bibitem{MondrerShapley2}
Monderer D., and Shapley L.S., {\it Fictitious play property for games with identical interests.} Journal of economic theory 68.1 (1996): 258-265.

\end{document}